\newcounter{minutes}\setcounter{minutes}{\time}
\newcounter{hours}\setcounter{hours}{\time}
\numberwithin{equation}{section}
\theoremstyle{plain}
\newtheorem{theorem}[equation]{Theorem}
\newtheorem{lemma}[equation]{Lemma}
\newtheorem{example}[equation]{Example}
\newtheorem{corollary}[equation]{Corollary}
\newtheorem{remark}[equation]{Remark}
\newtheorem{nonsec}[equation]{}
\newcommand{\beqn}{\begin{equation}}
\newcommand{\eeqn}{\end{equation}}
\newcommand{\bthm}{\begin{theorem}}
\newcommand{\ethm}{\end{theorem}}
\newcommand{\bpf}{\begin{proof}}
\newcommand{\epf}{\end{proof}}
\newcommand{\blem}{\begin{lemma}}
\newcommand{\elem}{\end{lemma}}
\newcommand{\bcol}{\begin{corollary}}
\newcommand{\ecol}{\end{corollary}}
\newcommand{\brmk}{\begin{remark}}
\newcommand{\ermk}{\end{remark}}
\font\ff=eusm10 scaled 1200
\def\K{\hbox{\ff K}}
\newcommand{\R}{\mathbb{R}}
\newcommand{\BB}{\mathbb{B}^2}
\newcommand{\RR}{\mathbb{R}^2}
\newcommand{\Rn}{\mathbb{R}^n}
\newcommand{\MRn}{\overline{\mathbb{R}}^n}
\newcommand{\Bn}{\mathbb{B}^n}
\DeclareMathOperator{\arctanh}{arth}
\newcommand{\Idk}{{\rm Id}_K}
\date{}
\title{Distortion of quasiconformal mappings with identity boundary values}
\author{Matti Vuorinen}
\author{Xiaohui Zhang }
\address{Department of Mathematics and Statistics, University of Turku, 20014 Turku,
Finland} \email{vuorinen@utu.fi, xiazha@utu.fi}
\begin{document}

\def\thefootnote{}
\footnotetext{ \texttt{\tiny File:~\jobname .tex,
          printed: \number\year-\number\month-\number\day,
          \thehours.\ifnum\theminutes<10{0}\fi\theminutes}
} \makeatletter\def\thefootnote{\@arabic\c@footnote}\makeatother

\maketitle



\begin{abstract}
Teichm\"uller's classical mapping problem for plane domains
concerns  finding a lower bound for the maximal dilatation of a quasiconformal homeomorphism
which holds the boundary pointwise fixed, maps the domain onto itself,
and maps a given point of the domain to another given point of the domain.
For a domain $D \subset {\mathbb R}^n\,,n\ge 2\,,$ we consider the class of all $K$-
quasiconformal maps of $D$ onto itself with identity boundary values
and Teichm\"uller's problem in this context.
 Given a map $f$ of this class and a point $x\in D\,,$ we show that the maximal dilatation of $f$ has a lower bound in terms of the distance of $x$ and $f(x)$. We improve recent results for the unit ball and consider this problem in other more general domains. For instance, convex domains, bounded domains and domains with uniformly perfect boundaries are studied.
\end{abstract}

{\small \sc Keywords.} {quasiconformal mappings, identity boundary values, distortion theorems  }

{\small \sc 2010 Mathematics Subject Classification.} {30C65}



\section{Introduction}

Teichm\"uller's classical mapping problem for plane domains
concerns  finding a lower bound for the maximal dilatation of a quasiconformal homeomorphism
which holds the boundary pointwise fixed, maps the domain onto itself,
and maps a given point of the domain to another given point of the domain (see \cite{av,k,mv,t}).
G.J.~Martin \cite{m2} has recently studied the Teichm\"uller  problem for the mean distortion.
The classical problem has found applications in the theory of homogeneity of domains as introduced in \cite{gp} and more recently in the homogeneity constants of surfaces \cite{bbcmt,bcmt,km}.

Let $D$ be a proper subdomain of $\Rn$ $(n\geq2)$, and let
$$
{\rm Id}_K(\partial D)=\{f:\Rn\to\Rn\,\,\mbox{ is $K$-quasiconformal}: f(x)=x,\, \forall x\in\Rn\setminus D\}.
$$
In his classical work \cite{t} O. Teichm\"uller studied the class $\Idk(\partial D)$ with $D=\RR\setminus\{0,e_1\}, e_1=(1,0)\,,$ and proved the following sharp inequality
$$
\rho_D(x,f(x))\leq\log K
$$
for all $x\in D$, where $\rho_D$ is the hyperbolic metric of $D=\RR\setminus\{0,e_1\}$. This result may be regarded as a stability result since it says that $f(x)$ is contained in the closure of the  hyperbolic ball $B_{\rho_D}(x,\log K)$ centered at the point $x$ and with the radius $\log K$.  In particular, the radius tends to 0 as $K\to1$.

J. Krzy\.z \cite{k} considered the same problem for the case of the unit disk, and  G. D. Anderson and M. K. Vamanamurthy \cite{av} found  a counterpart for Krzy\.z's result in the case of the unit ball in ${\mathbb R}^n, n\ge 3,$ under an additional symmetry hypothesis. Very recently, V. Manojlovi\'c and M. Vuorinen \cite{mv} removed the extra symmetry hypothesis and proved the following Theorem \ref{mvthm}. Continuing
the work of \cite{mv}, we study the case of subdomains of ${\mathbb R}^n\,$ more general than the unit ball. For basic information on quasiconformal maps in ${\mathbb R}^n\,,n \ge 2\,,$ we refer the reader to \cite{fm,lv,va}.

 As in \cite[p.~97 (7.44), p.~138, Theorem 11.2]{vu88}, we denote by $\varphi_{K,n}:[0,1]\to[0,1], K>0,$  the
 special function connected with the Schwarz lemma. Its precise definition of $\varphi_{K,n}$ is given in (\ref{phi}). What is  important is that it is an increasing homeomorphism with $\varphi_{K,n}(r)\to r$ as $K\to 1\,.$
 Write $\Bn(r)=\{x\in{\mathbb{R}}^n\,:\,|x|<r\}$ and $\Bn =\Bn(1)$.

\bthm\cite[Theorem 1.9]{mv}\label{mvthm}
If $f\in\Idk(\partial \Bn)$, then, for all $x\in\Bn$,
$$
  \rho_{\Bn}(x,f(x))\leq\log\dfrac{1-\varphi_{1/K,n}(1/\sqrt2)^2}{\varphi_{1/K,n}(1/\sqrt2)^2}
$$
where $\rho_{\Bn}$ is the hyperbolic metric defined in (\ref{rho}).
\ethm

Theorem \ref{mvthm} shows that the mapping $f$ uniformly tends to the identity mapping when the maximal dilatation goes to 1.

In this paper we will first prove the following theorem which is similar to the result of Manojlovi\'c and Vuorinen.

\begin{theorem}\label{bnd4ball}
If $f\in\Idk(\partial \Bn)$, then, for all $x\in\Bn$,
$$
  \rho_{\Bn}(x,f(x))\leq\log\frac{1-\varphi_{1/K,n}(1/2)}{\varphi_{1/K,n}(1/2)}.
$$
\end{theorem}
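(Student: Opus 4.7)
My plan is to reduce to bounding $|f(0)|$ for $f\in\Idk(\partial\Bn)$ via Möbius conjugation, and then to combine the Schwarz lemma for $K$-quasiconformal self-maps of $\Bn$ with a ring-modulus argument sharper than the Teichm\"uller-type ring used in \cite{mv}.

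First I would reduce to the case $x=0$. For arbitrary $x \in \Bn$, let $\tau$ be any Möbius automorphism of $\Bn$ with $\tau(0)=x$. The conjugate $\tilde f = \tau^{-1}\circ f\circ\tau$ is again $K$-quasiconformal with identity boundary values (Möbius automorphisms of $\Bn$ are conformal and map $\partial\Bn$ onto itself), and because $\tau$ is a hyperbolic isometry, $\rho_\Bn(0,\tilde f(0)) = \rho_\Bn(x,f(x))$. Writing $y = f(0)$, $t = |y|$, and rotating so that $y = te_1$, the target bound reduces to the explicit inequality $t \le 1-2\varphi_{1/K,n}(1/2)$.

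Next I would apply the Schwarz lemma. Let $\sigma$ denote the Möbius automorphism of $\Bn$ fixing $\pm e_1$ and sending $y$ to $0$; then $g = \sigma\circ f$ is a $K$-quasiconformal self-map of $\Bn$ with $g(0)=0$, so the two-sided Schwarz lemma (see \cite[Theorem 11.2]{vu88}) gives
\begin{equation*}
\varphi_{1/K,n}(|w|) \;\le\; |g(w)| \;\le\; \varphi_{K,n}(|w|), \qquad w \in \Bn.
\end{equation*}
Equivalently, for every $r\in(0,1)$ one has the inclusions $B(0, \varphi_{1/K,n}(r)) \subseteq g(B(0,r)) \subseteq B(0,\varphi_{K,n}(r))$. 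Applying the inner inclusion at $r = 1/2$ and pulling back by $\sigma^{-1}$ (which acts as $s \mapsto (s+t)/(1+st)$ on the $e_1$-axis and maps $\Bn$ onto $\Bn$) gives $\sigma^{-1}(B(0,\varphi_{1/K,n}(1/2))) \subseteq f(B(0,1/2))$, where the left-hand side is the explicit Euclidean ball of center $\tfrac{t(1-b^2)}{1-t^2b^2}\,e_1$ and radius $\tfrac{b(1-t^2)}{1-t^2b^2}$ with $b := \varphi_{1/K,n}(1/2)$.

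The key step is then to combine this geometric constraint with a ring-modulus comparison, exploiting the condition $f|_{\partial\Bn} = \mathrm{id}$. The analogous argument in \cite{mv} applies the $K$-qc distortion inequality to the Teichm\"uller ring $\mathbb{R}^n\setminus([-e_1,0]\cup[e_1,\infty])$ of capacity $\tau_n(1) = \gamma_n(\sqrt2)/2^{n-1}$ (by the Gr\"otzsch--Teichm\"uller identity), which forces the weaker bound $t \le 1-2\varphi_{1/K,n}(1/\sqrt2)^2$. To obtain the sharper bound $t \le 1-2\varphi_{1/K,n}(1/2)$ one instead needs a ring of capacity $\gamma_n(2)$ (so as to produce $\varphi_{1/K,n}(1/2)$ \emph{linearly} rather than squared). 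A natural candidate is the ring $\mathbb{R}^n\setminus(\overline{B(0,1/2)}\cup[e_1,\infty])$, whose image under $f$ has $f(\overline{B(0,1/2)}) \ni y$ as bounded complementary component and $[e_1,\infty]$ (fixed, since $f$ is the identity outside $\Bn$) as the other component; the qc distortion inequality applied to this ring, together with the Schwarz inclusion from the previous step, should yield the desired inequality after algebraic simplification of the formulas for $\sigma^{-1}$. The main obstacle I foresee is verifying that this capacity-$\gamma_n(2)$ Gr\"otzsch ring argument combines with the extremal-continuum comparison without losing a factor, and that the resulting inequality reduces cleanly to $t \le 1-2b$ rather than to the weaker $t \le 1-2b^2$ of \cite{mv}.
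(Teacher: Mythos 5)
Your reduction to $x=0$ and the identification of the target inequality $|f(0)|\le 1-2\varphi_{1/K,n}(1/2)$ are correct and agree with the paper. The gap is in the key step: combining the Schwarz lemma with the Gr\"otzsch ring $\Rn\setminus(\overline{B(0,1/2)}\cup[e_1,\infty))$ does \emph{not} yield this bound, and the loss you feared is real. Carry out the algebra you postponed: with $t=|f(0)|$, $a=\varphi_{K,n}(1/2)$, $b=\varphi_{1/K,n}(1/2)$, your ball $\sigma^{-1}(B(0,b))$ has center $c\,e_1$ and radius $\rho$ with $\rho/(1-c)=b(1+t)/(1+tb^2)$, so monotonicity of capacity plus ${\rm cap}(fR)\le K\,{\rm cap}(R)=K\gamma_n(2)$ gives $b(1+t)/(1+tb^2)\le a$, i.e. $t\le (a-b)/(b(1-ab))$. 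One checks that $(a-b)/(b(1-ab))>1-2b$ for every $K>1$ (using $a\ge 1/2\ge b$, the comparison reduces to the sign of $(b-1)^2(2b-1)$), and the bound is even vacuous for moderate $K$: for $n=2$, $K=2$ one has $a=2\sqrt2/3\approx0.943$, $b=7-4\sqrt3\approx0.072$, and your inequality reads $t\le 13.0$, whereas the theorem asserts $t\le 0.856$. The reverse pairing (lower modulus bound for $fR$ against the outer Schwarz inclusion) is likewise vacuous. So the Schwarz-lemma ball is too lossy an input to recover the linear bound $1-2b$.

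The paper's proof needs no Schwarz lemma and runs the modulus estimate in the preimage direction, which is where the identity boundary values do their real work. Choose $z\in\partial\Bn$ with $g(0)\in[0,z]$ and set $\Gamma=\Delta([g(0),z],\partial\Bn(z,2);\Bn(z,2))$, so $M(\Gamma)=\gamma_n\bigl(2/(1-|g(0)|)\bigr)$. The decisive observation is that $g^{-1}([g(0),z])$ is a continuum containing both $0=g^{-1}(g(0))$ and the \emph{fixed boundary point} $z=g^{-1}(z)$, hence joins the center of $\Bn(z,2)$ to a point at distance $1$; spherical symmetrization at $z$ gives $M(g^{-1}\Gamma)\ge\gamma_n(2)$, and $K\,M(\Gamma)\ge M(g^{-1}\Gamma)$ yields $|g(0)|\le 1-2\varphi_{1/K,n}(1/2)$ at once. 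Your argument exploits only the fixed ray $[e_1,\infty)$ outside $\Bn$; the missing ingredient is the fixed point of $\partial\Bn$ on the ray through $g(0)$, which supplies for free the large preimage continuum that you were trying to manufacture from the Schwarz lemma.
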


Motivated by a question of F.W. Gehring, J. Krzy\.z \cite[Theorem
1]{k} proved the following theorem. See also Teichm\"uller \cite{t} and Krushkal \cite[p.59]{kr}.

\begin{theorem} \label{krzyz0}
\cite[Theorem 1]{k} For $f\in Id_K(\partial
\BB)$ the sharp bounds are:
\begin{equation}
\label{krzyz1}
|f(0)|\leqslant\mu^{-1}\left(\log\frac{\sqrt{K}+1}{\sqrt{K}-1}\right)\equiv c
\end{equation}
where $\mu$ is the function defined in (\ref{mymu}) and
\begin{equation}
\label{krzyz2} \tanh\frac{\rho_{\BB}(f(z),z)}{2}\leqslant c
\end{equation}
for every $z\in \BB$, where $\rho_{\BB}$ is the hyperbolic metric.
\end{theorem}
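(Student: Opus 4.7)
The two inequalities \eqref{krzyz1} and \eqref{krzyz2} are equivalent through a M\"obius conjugation. For $z \in \BB$ let $\sigma_z(u) = (u-z)/(1-\bar z u)$ be the disk automorphism with $\sigma_z(z) = 0$. Since $\sigma_z$ bijects $\partial\BB$ onto itself and $f|_{\partial\BB} = \mathrm{id}$, the conjugate $g := \sigma_z \circ f \circ \sigma_z^{-1}$ is again $K$-quasiconformal with identity boundary values, so $g \in \Idk(\partial\BB)$. Applying \eqref{krzyz1} to $g$ at the origin and using the standard identity $|\sigma_z(u)| = \tanh(\rho_{\BB}(u,z)/2)$ for the pseudo-hyperbolic distance yields
$$ \tanh\frac{\rho_{\BB}(z,f(z))}{2} = |\sigma_z(f(z))| = |g(0)| \leq c, $$
which is \eqref{krzyz2}; conversely, \eqref{krzyz2} at $z = 0$ recovers \eqref{krzyz1}. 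So the substantive work is to prove \eqref{krzyz1}.

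After a rotation I may assume $w := f(0) \in [0, 1)$. I then extend $f$ to a $K$-quasiconformal self-map $\tilde f$ of $\hat{\mathbb C}$ by Schwarz reflection across $\partial\BB$, $\tilde f(z) := 1/\overline{f(1/\bar z)}$ for $|z|>1$. The identity boundary values of $f$ make $\tilde f$ continuous across $\partial\BB$, hence $K$-quasiconformal on $\hat{\mathbb C}$, with $\tilde f(0) = w$, $\tilde f(\infty) = 1/w$, $\tilde f|_{\partial\BB} = \mathrm{id}$, and the symmetry $\tilde f(1/\bar z) = 1/\overline{\tilde f(z)}$.

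The heart of the argument is a modulus comparison on $\hat{\mathbb C}$. By the Teichm\"uller extremal modulus theorem, among ring domains with boundary continua containing two prescribed pairs of points the maximum modulus is realized by the complement of the two line segments (after M\"obius normalization) and equals a Teichm\"uller modulus computable in terms of $\mu$. The natural candidate is the reflection-symmetric ring $R = \hat{\mathbb C}\setminus([0,w]\cup[1/w,\infty])$; sending $\{0,w,1/w,\infty\}$ to $\{-1,0,(1-w^2)/w^2,\infty\}$ by a real M\"obius map identifies $R$ with a Teichm\"uller ring of modulus $\tau((1-w^2)/w^2) = 2\mu(w)$. Combining the quasi-invariance lower bound $\mathrm{mod}(\tilde f^{-1}(R))\geq 2\mu(w)/K$ with an extremal upper bound on $\mathrm{mod}(\tilde f^{-1}(R))$ in terms of $\tilde f^{-1}(0)$, and invoking the functional identity $\mu(r)\,\mu(\sqrt{1-r^2}) = \pi^2/4$ together with $\log\frac{\sqrt K+1}{\sqrt K-1} = 2\,\arctanh(1/\sqrt K)$, should deliver $\mu(w) \geq \log\frac{\sqrt K+1}{\sqrt K-1}$, i.e.\ $w \leq c$.

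The main obstacle is pinning down precisely which ring and which extremal inequality produce the constant involving $\sqrt K$ rather than $K$: a naive Gr\"otzsch comparison applied to $\BB\setminus[0,w]$ yields only $|\tilde f^{-1}(0)| \leq \mu^{-1}(\mu(w)/K)$ which, combined symmetrically with the analogous bound for $f^{-1}$, gives merely the vacuous $w \leq \mu^{-1}(\mu(w)/K^2)$. The reflection symmetry of $R$ under $z\mapsto 1/\bar z$ must therefore enter in a nontrivial way, effectively splitting the $K$-distortion equally between the interior and exterior of $\BB$; this is the origin of $\sqrt K$. Sharpness is then shown by exhibiting the Teichm\"uller extremal map: uniformize $\BB\setminus[0,c]$ to a rectangle via an elliptic integral, apply an affine stretch of ratio $K$ along the appropriate coordinate, and reassemble by Schwarz reflection across $\partial\BB$; the resulting map lies in $\Idk(\partial\BB)$, has constant dilatation $K$, and sends $0$ to exactly $c$.
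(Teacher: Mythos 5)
First, a point of context: the paper does not prove this theorem at all --- it is quoted verbatim from Krzy\.z \cite[Theorem 1]{k} (and is essentially Teichm\"uller's Verschiebungssatz \cite{t}), so there is no internal proof to compare your attempt against; your write-up has to stand on its own as a reconstruction of Krzy\.z's argument. The first part of your proposal is fine: the equivalence of \eqref{krzyz1} and \eqref{krzyz2} via conjugation by a disk automorphism $\sigma_z$, using that $\sigma_z\circ f\circ\sigma_z^{-1}$ stays in $\Idk(\partial\BB)$ and that $|\sigma_z(u)|=\tanh(\rho_{\BB}(u,z)/2)$, is exactly the standard reduction (the same device the paper uses in the proof of Theorem \ref{bnd4ball}).

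The problem is that the substantive half, inequality \eqref{krzyz1}, is not actually proved. You assemble plausible ingredients --- the reflected extension $\tilde f$, the symmetric ring $R=\hat{\mathbb C}\setminus([0,w]\cup[1/w,\infty])$ of modulus $2\mu(w)$ (note that in the paper's notation $\tau_2((1-w^2)/w^2)=\pi/\mu(w)$ is a \emph{capacity}, not this modulus; keeping the normalizations straight is precisely what is needed here) --- but the crucial step, the extremal-length inequality that converts $K$-quasi-invariance of a modulus into a bound with $\sqrt K$ rather than $K$, is left as a declared ``obstacle.'' You even verify yourself that the obvious Gr\"otzsch comparison collapses to the vacuous $w\le\mu^{-1}(\mu(w)/K^2)$. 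Since the whole content of the theorem is the sharp constant $\mu^{-1}\bigl(\log\frac{\sqrt K+1}{\sqrt K-1}\bigr)$, saying that the symmetry ``should deliver'' it and ``must enter in a nontrivial way'' is naming the difficulty, not resolving it: you would need to exhibit the specific curve family (or the specific superadditivity/Modulsatz estimate exploiting the invariance of $\tilde f^{-1}(R)$ under $z\mapsto1/\bar z$) and carry the computation through the identities for $\mu$. The sharpness paragraph has the same status --- and note that $\BB\setminus[0,c]$ is doubly connected, so it uniformizes to an annulus rather than a rectangle; the extremal map is Teichm\"uller's affine stretch in the conformal coordinate of the symmetric ring, and that it has constant dilatation $K$, restricts to the identity on $\partial\BB$, and moves $0$ exactly to $c$ all require verification. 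As it stands the argument is an outline with its central estimate missing, so it cannot be accepted as a proof; the honest course here is to do what the paper does and cite \cite{k}, or else to reproduce Krzy\.z's modulus argument in full.
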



A comparison shows that Theorem \ref{bnd4ball} yields a better bound than Theorem \ref{mvthm}  when $n=2$ (see Remark \ref{rmk4ballpf} (3) below).
However, the sharp result of Krzy\.z, which only applies for
$n=2\,,$ is even better when $n=2\,.$ For a graphical
comparison of these bounds for $n=2$, see Figure \ref{fig4rho}.


We next extend this result to the case of convex domains.
To this end, we require a suitable metric, the distance ratio metric
$j_D$ of a domain $D \subset \mathbb{R}^n\,,$ defined in Section 2.

\begin{theorem}\label{thm:bnd4convex}
Let $D\subsetneq\Rn$ be a convex domain and $f\in{\rm Id}_K(\partial D)$. Then, for all $x\in D$,
$$
j_D(x,f(x))\leq \log\left(1+\sqrt{\left(\frac{2\varphi_{K,n}(1/3)}{1-\varphi_{K,n}(1/3)}\right)^2-1}\right).
$$
\end{theorem}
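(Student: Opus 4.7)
Fix $x \in D$, set $d := d(x, \partial D)$, and choose $x_0 \in \partial D$ with $|x - x_0| = d$. The plan is to use convexity to force $f$ to be the identity on a half-space through $x_0$, normalize the configuration by a similarity onto the unit ball $\Bn$, and then establish a quasiconformal distortion estimate whose governing parameter is $\varphi_{K,n}(1/3)$.

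First, convexity of $D$ yields $B(x, d) \subset D$ and, by a supporting-hyperplane argument at $x_0$, the closed half-space $H^+ := \{z \in \Rn : \langle z - x_0, x - x_0\rangle \le 0\}$ satisfies $\operatorname{int}(H^+) \cap D = \emptyset$: if some $z$ lay in the intersection, the convex hull $\operatorname{conv}(\{z\} \cup B(x, d)) \subset D$ would contain $x_0$ in its interior, contradicting $x_0 \in \partial D$. Consequently $f|_{H^+} = \operatorname{id}$; in particular $f(x_0) = x_0$ and $f(y) = y$ for the reflected point $y := 2x_0 - x \in \operatorname{int}(H^+)$. Now the similarity $\phi(z) = (z - x_0)/(3d)$ sends $B(x_0, 3d)$ onto $\Bn$, with $\phi(x_0) = 0$ and $\phi(x) = e/3$ where $e := (x - x_0)/d$; the conjugated map $\tilde f := \phi \circ f \circ \phi^{-1}$ is $K$-quasiconformal on $\Rn$, fixes both $0$ and $-e/3$, and is the identity on the half-space $\{z : \langle z, e\rangle \le 0\}$. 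The problem reduces to bounding $|\tilde f(e/3) - e/3| = |f(x) - x|/(3d)$.

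The heart of the proof is the estimate
\[
 \frac{|f(x) - x|}{d - |f(x) - x|} \le \sqrt{\left(\frac{2\,\varphi_{K,n}(1/3)}{1 - \varphi_{K,n}(1/3)}\right)^2 - 1}.
\]
I would derive this by a ring-modulus argument: the Gr\"otzsch-type ring $B(x_0, 3d) \setminus [x_0, x]$ has modulus $\mu(1/3)$, and $K$-quasiconformality of $f$ forces the image ring to have modulus at least $\mu(1/3)/K = \mu(\varphi_{K,n}(1/3))$. The identity of $f$ on $H^+$ ensures that this image ring retains the lower hemisphere of $\partial B(x_0, 3d)$ on its outer boundary, and combining this constraint with the extremal property of the Gr\"otzsch ring and the auxiliary fixed point $-e/3$ of $\tilde f$ should yield the Euclidean bound above through the defining identity for $\varphi_{K,n}$.

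To conclude, $d(x, \partial D) = d$ and the inclusion $B(x, d) \subset D$ force $d(f(x), \partial D) \ge d - |f(x) - x|$, so the minimum appearing in $j_D(x, f(x))$ is at least $d - |f(x) - x|$; substituting and using the displayed estimate gives the claimed inequality. The main obstacle is the third step: because $\tilde f$ need not map $\Bn$ into $\Bn$ (only the closed lower hemisphere of $\partial \Bn$ is pinned), the classical qc Schwarz lemma does not apply directly. The ring-modulus comparison must therefore be set up with care, exploiting both the half-space symmetry and the two fixed points $0$ and $-e/3$ to pin down precisely the factor $\varphi_{K,n}(1/3)$ that appears in the statement.
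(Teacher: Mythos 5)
Your overall strategy --- use the supporting hyperplane at the nearest boundary point, note that $f$ is the identity on the complementary half-space, and run a Gr\"otzsch-ring modulus comparison producing the factor $\varphi_{K,n}(1/3)$ --- is the right one, but there is a genuine gap at exactly the point you flag: the ``heart'' estimate is asserted, not proved, and the ring you propose cannot deliver it. The ring $\Bn(x_0,3d)\setminus[x_0,x]$ has outer boundary the sphere $\partial\Bn(x_0,3d)$, of which $f$ pins only the part lying in $H^+$; the image ring therefore has an outer complementary component you do not control, so neither the extremal property of the Gr\"otzsch ring nor spherical symmetrization applies in the form you need. Moreover, any such modulus argument naturally bounds the distance from $f(x)$ to the pinned continuum through $x_0$, i.e.\ $|f(x)-x_0|$, not $|f(x)-x|$, so an additional geometric step is unavoidable. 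Finally, your concluding bound $\min\{d(x),d(f(x))\}\ge d-|f(x)-x|$ degenerates when $|f(x)-x|\ge d$; the standard repair is to replace $f$ by $f^{-1}\in\Idk(\partial D)$ so that one may assume $d(x)\le d(f(x))$ and the minimum equals $d$ exactly.

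The paper's proof resolves both difficulties with one device. Instead of a sphere centred at $z=x_0$, take the hyperplane $P_t$ parallel to the supporting hyperplane at $z$ but at distance $t$ on the side opposite to $D$, and let $H$ be the half-space bounded by $P_t$ that contains $D$. Since $P_t\cap\overline D=\emptyset$, $f$ fixes $P_t$ pointwise and maps $H$ onto itself, so for $\Gamma=\Delta([x,z],P_t;H)$ the image family again joins a continuum through $z$ and $f(x)$ to the genuinely fixed set $P_t$ inside $H$. An inversion carries $H$ conformally onto a ball with $[x,z]$ going to a radial segment, so $M(\Gamma)$ is exactly a Gr\"otzsch capacity, and symmetrization bounds $M(f\Gamma)$ from below by another one; the inequality $K\,M(\Gamma)\ge M(f\Gamma)$ then yields
$$
\frac{|f(x)-z|}{2t+|f(x)-z|}\le\varphi_{K,n}\Bigl(\frac{|x-z|}{2t+|x-z|}\Bigr),
$$
and the choice $t=|x-z|$ gives $|f(x)-z|/|x-z|\le 2\varphi_{K,n}(1/3)/(1-\varphi_{K,n}(1/3))$. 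A second use of convexity --- $D$ lies on one side of the supporting hyperplane and $d(f(x),\partial D)\ge d(x,\partial D)$, whence $\langle f(x)-x,\,z-x\rangle\le0$ and $|f(x)-z|^2\ge|f(x)-x|^2+|x-z|^2$ --- converts this into the required bound on $|f(x)-x|/|x-z|$ and hence on $j_D(x,f(x))$. You would need to supply both of these ingredients to close your argument.
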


For $K$ close to $1$, the inequality of Theorem \ref{thm:bnd4convex} can be simplified further.

\bthm\label{thm:bnd4convexSK}
 Let $D\subsetneq\Rn$ be a convex domain and
 $$K_n=\left(1+\dfrac{\log 2}{n-1+\log 3}\right)^{n-1}
 \in[K_2,2),\qquad K_2\approx1.33029.$$
 If $K\in(1,K_n]$ and $f\in{\rm Id}_K(\partial D)$,
 then for all $x\in D$
$$
j_D(x,f(x))\leq2\sqrt{1+\log 6}(K-1)^{1/2}.
$$
\ethm

A common feature of all these results, including Teichm\"uller's
original result, is that if $f(x) \neq x$ for some point in the
domain, then the maximal dilatation $K >1\,.$ This lower bound
is not true for all subdomains in $\mathbb{R}^n, n\ge3,$ as is
easy to show by an example, see Remark \ref{mvu84rmk}.

In the case of uniform domains \cite{ms,va88} with connected boundary, M. Vuorinen \cite{vu84} established
\begin{equation} \label{mvu84}
  K\geq c_1(n,D)k_D(x,f(x))^n
\end{equation}
whenever the quasihyperbolic distance $k_D(x,f(x))$ exceeds a bound depending only on $n$ and $D$. Here $c_1(n,D)$ is a positive constant depending only on $n$ and $D$.

The proof of the inequality  \eqref{mvu84} makes use of the classical V\"ais\"al\"a's lower bound for the modulus of the family of curves joining continua \cite[Theorem 10.12]{va}. Aseev's theorem \cite[Theorem 3]{as} (see Lemma \ref{lemma} below) provides a counterpart of this result with continua
replaced with uniformly perfect sets. In this way we can prove that \eqref{mvu84} also holds for the case of uniform domains with uniformly perfect boundary \cite{bp,jv,s}.

\bthm\label{thm4unipfbnd}
Let $D\subsetneq\Rn$ be a uniform domain with uniformly perfect boundary and $x\in D$,
and let $f\in{\rm Id}_K(\partial D)$.
Then there exists a positive constant $c_2(n,D)$ depending only on $n$ and the constants of uniformity and uniform perfectness of the domain $D$ such that
for all $x \in D$
$$
  K\geq c_2(n,D)k_D(x,f(x))^n.
$$
\ethm

The H\"older continuity of quasiconformal self mappings of the unit ball with the origin fixed is an important topic which was first studied by Ahlfors \cite{ah} when
the dimension $n=2$.  Refining Ahlfors' result, A. Mori proved that a $K$-quasiconformal
mapping $f: \BB \to \BB= f(\BB), f(0)=0,$ satisfies for all $x,y\in \BB$ the inequality
$$  |f(x)-f(y)| \le M |x-y|^{1/K}$$
with the best possible constant $M=16$
independent of $K\,$ and the sharp exponent $1/K$ \cite{lv}.
Later on, it was conjectured that here $16$ can be
replaced by $16^{1-1/K}\,.$ This conjecture, sometimes referred to as
the {\it Mori's conjecture for planar quasiconformal maps of the
unit disk}, is a well-known open problem and it has been studied
by many people.
For the higher dimensional case $n \ge 3$ an asymptotically sharp constant, i.e. a constant tending to $1$
when $K \to 1\,,$ was proved for the first time by Fehlmann and Vuorinen \cite{fv}. Very recently, Bhayo and Vuorinen \cite{bv} improved the previous results by using a refined inequality for the Teichm\"uller function and introducing an additional parameter which was chosen in an optimal way. Many authors have studied these questions. For the detailed history of the H\"older continuity of quasiconformal mappings, the readers are referred to the bibliographies of \cite{mrv}, \cite{fv}, \cite{vu88} and \cite{bv}. We will consider this problem and improve the constant for the class of quasiconformal mappings of the unit ball with identity boundary values. Note that in this case it is not required that the origin be fixed by the mapping.

\begin{theorem}\label{holder}
If $f\in{\rm Id}_K(\partial\Bn)$, then for all $x,y\in\Bn$
$$|f(x)-f(y)|\leq M_1(n,K)|x-y|^\alpha,\qquad \alpha=K^{1/(1-n)}$$
where $M_1(n,K)=\lambda_n^{1-\alpha}C(\alpha)$ and $C(\alpha)=2^{1-\alpha}\alpha^{-\alpha/2}(1-\alpha)^{(\alpha-1)/2}\,,$
with $M_1(n,K)\to1\,$ when $K \to 1\,,$ and $\lambda_n\in[4,2e^{n-1})$ is the Gr\"otzsch ring constant.
\end{theorem}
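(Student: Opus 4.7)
The plan is to follow the modulus-comparison strategy behind the Mori-type H\"older estimates of Fehlmann-Vuorinen \cite{fv} and its recent refinement by Bhayo-Vuorinen \cite{bv}, but with the identity boundary condition replacing the usual normalization $f(0)=0$. The first step is to extend $f$ by the identity outside $\Bn$; since $f$ already equals the identity on $\partial\Bn$ and the sphere is removable for quasiconformality, this yields a $K$-quasiconformal self-homeomorphism of $\Rn$, so that modulus inequalities on the whole space become available.

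For distinct $x,y\in\Bn$, I would consider the curve family $\Gamma=\Delta([x,y],\Rn\setminus\Bn;\Rn)$. Because $f$ is the identity outside $\Bn$, its image family is $f(\Gamma)=\Delta(f([x,y]),\Rn\setminus\Bn;\Rn)$, and quasiconformality gives $M(\Gamma)\le K\,M(f(\Gamma))$. The continuum $f([x,y])$ has diameter at least $|f(x)-f(y)|$, so the extremal property of the Gr\"otzsch ring, with outer complementary component $\Rn\setminus\Bn$, produces an upper bound for $M(f(\Gamma))$ in terms of $|f(x)-f(y)|$; the same principle applied to $[x,y]$, with an intermediate sphere of radius $s\in(0,1)$ serving as an auxiliary free parameter, yields a matching lower bound for $M(\Gamma)$ in terms of $|x-y|$ and $s$.

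Rewriting the combined inequality via the definition of $\varphi_{K,n}$ and invoking the sharp bound $\varphi_{K,n}(r)\le\lambda_n^{1-\alpha}r^\alpha$ with $\alpha=K^{1/(1-n)}$, one arrives at an estimate of the form $|f(x)-f(y)|\le\lambda_n^{1-\alpha}H(\alpha,s)|x-y|^\alpha$. Minimizing $H(\alpha,s)$ over $s\in(0,1)$ is expected to reproduce the factor $C(\alpha)=2^{1-\alpha}\alpha^{-\alpha/2}(1-\alpha)^{(\alpha-1)/2}$, yielding the stated constant $M_1(n,K)=\lambda_n^{1-\alpha}C(\alpha)$. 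The limit $M_1(n,K)\to 1$ as $K\to 1$ is then automatic: $\alpha\to 1$ drives each of $\lambda_n^{1-\alpha}$, $2^{1-\alpha}$, $\alpha^{-\alpha/2}$ and $(1-\alpha)^{(\alpha-1)/2}$ to $1$ (for the last factor one uses $\varepsilon\log\varepsilon\to0$ as $\varepsilon=1-\alpha\to0$).

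The main obstacle is arranging the modulus comparison so that the identity-on-boundary condition is genuinely exploited rather than only the weaker property $f(\partial\Bn)=\partial\Bn$; without the full identity, unwanted boundary-distance factors $\mathrm{dist}(x,\partial\Bn)$ and $\mathrm{dist}(f(x),\partial\Bn)$ appear in the Gr\"otzsch bounds, and it is exactly the symmetry of the complement $\Rn\setminus\Bn$ (which is fixed pointwise by $f$) that makes these factors cancel between the two sides of the modulus inequality. A secondary but technically delicate point is the explicit one-variable minimization in $s$: as in the Bhayo-Vuorinen calculation, the optimal $s(\alpha)$ should be available in closed form so that the minimum of $H(\alpha,s)$ collapses exactly to $C(\alpha)$, and verifying this requires a careful calculus computation with logarithms and powers.
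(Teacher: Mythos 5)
Your plan diverges from the paper's proof and, as written, contains genuine gaps. First, the orientation of your modulus comparison is wrong for the conclusion you want: for $\Gamma=\Delta([x,y],\Rn\setminus\Bn)$, the modulus of a family joining a continuum to $\Rn\setminus\Bn$ is \emph{increasing} in the size of the continuum, so a lower bound for $M(\Gamma)$ in terms of $|x-y|$ combined with $M(\Gamma)\le K\,M(f\Gamma)$ and a bound for $M(f\Gamma)$ in terms of $|f(x)-f(y)|$ yields a \emph{lower} bound for $|f(x)-f(y)|$, not the desired upper bound. Reversing the roles, you need an \emph{upper} bound for $M(\Gamma)$, and any such bound (e.g. $M(\Gamma)\le\gamma_n\bigl(2d([x,y],\partial\Bn)/|x-y|\bigr)$ after enclosing $[x,y]$ in a ball) unavoidably involves $d([x,y],\partial\Bn)$, while the matching lower bound for $M(f\Gamma)$ involves $d(f([x,y]),\partial\Bn)$. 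Your claim that these boundary-distance factors cancel because $\Rn\setminus\Bn$ is fixed pointwise is unsupported: the identity boundary condition does not give $d(f(x),\partial\Bn)=d(x,\partial\Bn)$, only a $K$-dependent comparison (via Theorem \ref{bnd4ball}), and carrying that through would pollute the constant. In particular there is no reason a minimization over an auxiliary sphere radius $s\in(0,1)$ would collapse to $C(\alpha)=2^{1-\alpha}\alpha^{-\alpha/2}(1-\alpha)^{(\alpha-1)/2}$; that constant is precisely $\min_{R>1}(R+R^{-1})/(R-R^{-1})^{\alpha}$ and is tied to a different construction.

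The idea you are missing is how the identity boundary values are actually exploited: after the identity extension, $f$ restricts to a $K$-quasiconformal self-map of \emph{every} dilated ball $\Bn(R)$, $R>1$. The quasiconformal Schwarz lemma $\tanh\tfrac{\rho(f(x),f(y))}{2}\le\varphi_{K,n}\bigl(\tanh\tfrac{\rho(x,y)}{2}\bigr)$ holds for arbitrary quasiconformal self-maps of a ball with no normalization at the origin; applying it in $\Bn(R)$ to points of $\overline{\Bn}$, together with the estimates $\frac{|u-v|}{1+|u||v|}\le\tanh\frac{\rho(u,v)}{2}\le\frac{|u-v|}{1-|u||v|}$ and $\varphi_{K,n}(r)\le\lambda_n^{1-\alpha}r^{\alpha}$, yields $|f(x)-f(y)|\le\lambda_n^{1-\alpha}A(R)|x-y|^{\alpha}$ with $A(R)=(R+R^{-1})/(R-R^{-1})^{\alpha}$. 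Minimizing over $R$ at $R_0=\sqrt{(1+\sqrt{\alpha})/(1-\sqrt{\alpha})}$ gives exactly $C(\alpha)$; the free parameter is the dilation $R$, not an intermediate sphere inside $\Bn$. You would need either to adopt this route or to supply the missing position-independent two-sided modulus estimates, which your current scheme does not provide.
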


For the planar case of $n=2$, I. Prause \cite{p} has proved that $4^{1-1/K}$ is the optimal constant under the same conditions of Theorem \ref{holder}.


\medskip

\section{Notation and preliminary results}
In this section we shall follow the standard notation and terminology for $K-$quasi\-con\-formal
mappings in the Euclidean $n-$space $\Rn$, see e.g. \cite{avvb}, \cite{va} and \cite{vu88}.

The \emph{hyperbolic metric} $\rho_{\Bn}(x,y)$ on $\Bn$ is defined by
\beqn\label{rho}
   \tanh^2\dfrac{\rho_{\Bn}(x,y)}{2}=\dfrac{|x-y|^2}{|x-y|^2+(1-|x|^2)(1-|y|^2)}.
\eeqn
A simple argument shows that we have
$$
  |x-y|\leq2\tanh\dfrac{\rho_{\Bn}(x,y)}{4}
$$
for all $x,y\in\Bn$ with equality for $x=-y$ (see \cite[(2.27)]{vu88}).

Let $D\subsetneq\R^n$ be a domain. The \emph{quasihyperbolic metric} $k_D$ is defined by \cite{gp}
$$
  k_{D}(x,y)=\inf_{\gamma\in\Gamma}\int_{\gamma}\frac{1}{d(z)}|dz|, {\quad} x,y\in D,
$$
where $\Gamma$ is the family of all rectifiable curves in $D$ joining $x$ and $y$, and
$d(z)=d(z,\partial D)$ is the Euclidean distance between $z$ and the boundary of $D$.
The \emph{distance-ratio metric} or $j-$\emph{metric} is defined as \cite{gp,vu85}
\beqn\label{jmetric}
   j_D(x,y)=\log\left(1+\frac{|x-y|}{\min\{d(x),d(y)\}}\right),\quad x,y\in D.
\eeqn
It is well known that \cite[Lemma 2.1]{gp}, \cite[(3.4)]{vu88}
$$
   j_D(x,y)\leq k_D(x,y)
$$
for all domains $D\subsetneq\Rn$ and $x,y\in D$.

A domain $D$ in $\Rn$, $D\neq\Rn$, is called \emph{uniform},
if there exists a number $U=U(D)\geq1$ such that
$k_D(x,y)\leq U\, j_D(x,y)$ for all $x,y\in D$.
Uniform domains were introduced by Martio and Sarvas \cite{ms}.
Presently, there are several equivalent definitions of uniform domains,
see, for instance, V\"ais\"al\"a \cite{va88}. The above definition, which
is most convenient for the sequel, is adopted
from Gehring and Osgood \cite{go} and  Vuorinen \cite{vu85}.

It is well known \cite[Lemma 7.56]{avvb} that the unit ball $\Bn$ is a uniform domain with the constant $U=2$.

Given $E,F,G\subset \Rn$ we use the notation $\Delta(E,F;G)$ for the family of all curves that join the sets $E$ and $F$ in $G$ and $M(\Delta(E,F;G))$ for its modulus. If $G=\Rn$, we may omit $G$ and simply denote $\Delta(E,F;G)$ by $\Delta(E,F)$. For a ring domain $R(C_0,C_1)$ with complementary components $C_0$ and $C_1$, we define the modulus of $R(C_0,C_1)$ by
$$
 {\rm mod}R(C_0,C_1)=\left(\dfrac{\omega_{n-1}}{M(\Delta(C_0,C_1))}\right)^{1/(n-1)},
$$
where $\omega_{n-1}$ is the surface area of the unit sphere in $\Rn$.

The \emph{Gr\"otzsch ring domain} $R_{G,n}(s)$, $s>1$, and the \emph{Teichm\"uller ring domain} $R_{T,n}(t)$, $t>0$, are doubly connected domains with complementary components $(\overline{\Bn},[se_1,\infty))$ and $([-e_1,0],[te_1,\infty))$, respectively. For their capacities we write
$$
  \left\{
  \begin{array}{ll}
  \gamma_n(s)={\rm cap}R_{G,n}(s)=M(\Delta(\overline{\Bn},[se_1,\infty])),\\
  \tau_n(t)={\rm cap}R_{T,n}(t)=M(\Delta([-e_1,0],[te_1,\infty])).
  \end{array}\right.
$$
These functions are related by the functional identity
$$
  \gamma_n(s)=2^{n-1}\tau_n(s^2-1).
$$

For $K>0$ we define an increasing homeomorphism $\varphi_{K,n}:[0,1]\to[0,1]$ with $\varphi_{K,n}(0)=0$, $\varphi_{K,n}(1)=1$ and
\beqn\label{phi}
  \varphi_{K,n}(r)=\dfrac{1}{\gamma_n^{-1}(K\gamma_n(1/r))},\quad 0<r<1.
\eeqn

The following important estimates are well known \cite{vu88}
\beqn\label{bd4phi}
r^\alpha\leq\varphi_{K,n}(r)\leq\lambda_n^{1-\alpha}r^\alpha\leq2^{1-1/K}Kr^\alpha,\quad \alpha=K^{1/(1-n)}\, ,
\eeqn
\beqn\label{bd4phi2}
2^{1-K}K^{-K}r^\beta\leq\lambda_n^{1-\beta}r^\beta\leq\varphi_{1/K,n}(r)\leq r^\beta,\quad \beta=1/\alpha,
\eeqn
where $K\geq1, r\in(0,1)\,,$ and the constant $\lambda_n\in[4,2e^{n-1})$ is the so-called \emph{Gr\"otzsch ring constant}. In particular, $\lambda_2=4$.

For $n\geq2$, $t\in(0,\infty)$, $K>0$, we denote
\beqn\label{eta}
\eta_{K,n}(t)=\tau_n^{-1}\left(\dfrac1K\tau_n(t)\right)=
\dfrac{1-\varphi_{1/K,n}(1/\sqrt{1+t})^2}{\varphi_{1/K,n}(1/\sqrt{1+t})^2}\,.
\eeqn

For the purpose of comparing our bounds to earlier bounds
we wish to express these functions in terms of well-known
functions. This is possible only for  $n=2$ \cite{avvb}.
and the general case  $n\ge3$ remains as a challenge.
To this end, it is enough to express the formulas for $\gamma_2(s)$
and $\varphi_{K,2}(r)$ in terms of classical special functions.
First, we consider a decreasing
homeomorphism $\mu:(0,1)\longrightarrow(0,\infty)$ defined by \cite{lv,vu88}
\begin{equation}
\label{mymu} \mu(r)=\frac\pi 2\,\frac{{\K}(r')}{{\K}(r)}, \quad {\K}(r)=\int_0^1\frac{dx}{\sqrt{(1-x^2)(1-r^2x^2)}}\, ,
\end{equation}
where ${\K}(r)$ is Legendre's complete elliptic integral
of the first kind and $ r'=\sqrt{1-r^2},$
for all $r\in(0,1)$. Now $\gamma_2(1/r) = 2 \pi/\mu(r), r \in (0,1)$
and $\varphi_{K,2}(r)= \mu^{-1}(\mu(r)/K)\,$ by \cite{lv,vu88}.


Let $\alpha>0$ and assume that $D\subset\overline \Rn$ is a closed set containing at least two points. Then $D$ is $s-$\emph{uniformly perfect} if there is no ring domain separating $D$ with the modulus greater than $s$. $D$ is \emph{uniformly perfect} if it is $s-$uniformly perfect for some $s>0$ \cite{bp}. Uniformly perfectness is a useful tool in many topics of geometric function theory. See \cite{s} for a survey of this topic. The following lemma is an analog of V\"ais\"al\"a's lemma \cite[Theorem 10.12]{va} with continua replaced by uniformly perfect sets.

\blem\cite[Theorem 3]{as}\label{lemma}
  Suppose that $s>0$ and that $s-$uniformly perfect sets $E_0$ and $E_1$ meets each component of the complement of the spherical ring $D=\{x:\, r_1<|x-x_0|<r_2\}\subset\Rn$ with the following relation between the radii $$
  r_2/r_1>1+2e^{s}.
  $$
  Then
  $$
  {\rm cap}(E_0,E_1;D)\geq C\log\dfrac{r_2}{r_1},
  $$
  where the constant $C>0$ depends only on $s$ and the dimension $n$ of the space.
\elem

Note that, from the proof of this lemma, it is easy to see that the result obviously holds if one of the two sets $E_0$ and $E_1$ is a continuum.


\medskip

\section{Proofs of main results}

\begin{nonsec}{\rm {\bf Remark.} \label{mvu84rmk}} {\rm
(1). Let $f: D\to D$ be a quasiconformal mapping which extends to a homeomorphism $f^*: \overline{D}\to\overline{D}$ with $f^*(x)=x$ for all $x\in\partial D$. By \cite[Theorem 1]{rickman} or \cite[Theorem 2]{va75}, the mapping $f$ can be extended to a quasiconformal mapping $\tilde{f}: \Rn\to\Rn$ by setting $\tilde{f}(x)=x$ for $x\not\in D$. Moreover, $\tilde{f}$ has the same dilatation as $f$.

(2). As pointed out in \cite{vu84}, it is not true for $n\ge 3$ that for
$f \in {\rm Id}_K(\partial D)$ the condition $k_D(x,f(x))>0$
implies $K>1\,.$ Indeed, let $X= \{(x,0,0): x \in {\mathbb R} \}$ be the $x_1$-axis, let $D= {\mathbb R}^3 \setminus X \,,$ and  let
$f: D \to D$ be a rotation  around the  $x_1$-axis with
$f(x) =(0,-1,0), x=(0,1,0)\,.$
Then $f$ is conformal, i.e. $K=1\,,$ $f$ keeps the $x_1$-axis $X= \partial D$ pointwise
fixed, and $D$ is a uniform domain with connected boundary $X$ and $k_D(x,f(x))= \pi\,.$ Clearly, for this domain $c_1(3,D)\le 1/ \pi^3 \,.$

(3). For uniformly perfect hyperbolic domains $D\subset\RR$ the hyperbolic metric and the quasihyperbolic metric are equivalent \cite{kl}, i.e. there exists a constant $C(D)>1$ such that
$$
\rho_D\leq k_D\leq C(D)\rho_D.
$$
Let $D$ be a uniformly perfect hyperbolic domain in the plane and $f \in {\rm Id}_K(\partial D)$. Then for all $z\in D$,
$$
j_D(z,f(z))\leq k_D(z,f(z))\leq C(D)\rho_D(z,f(z))\leq C(D)Kr(K),
$$
where $Kr(K)$ is Krzy\.z's bound
$$
Kr(K)=2\arctanh\mu^{-1}\left(\log\frac{\sqrt{K}+1}{\sqrt{K}-1}\right).
$$
}
\end{nonsec}

\begin{figure}[h]
\includegraphics[width=7cm]{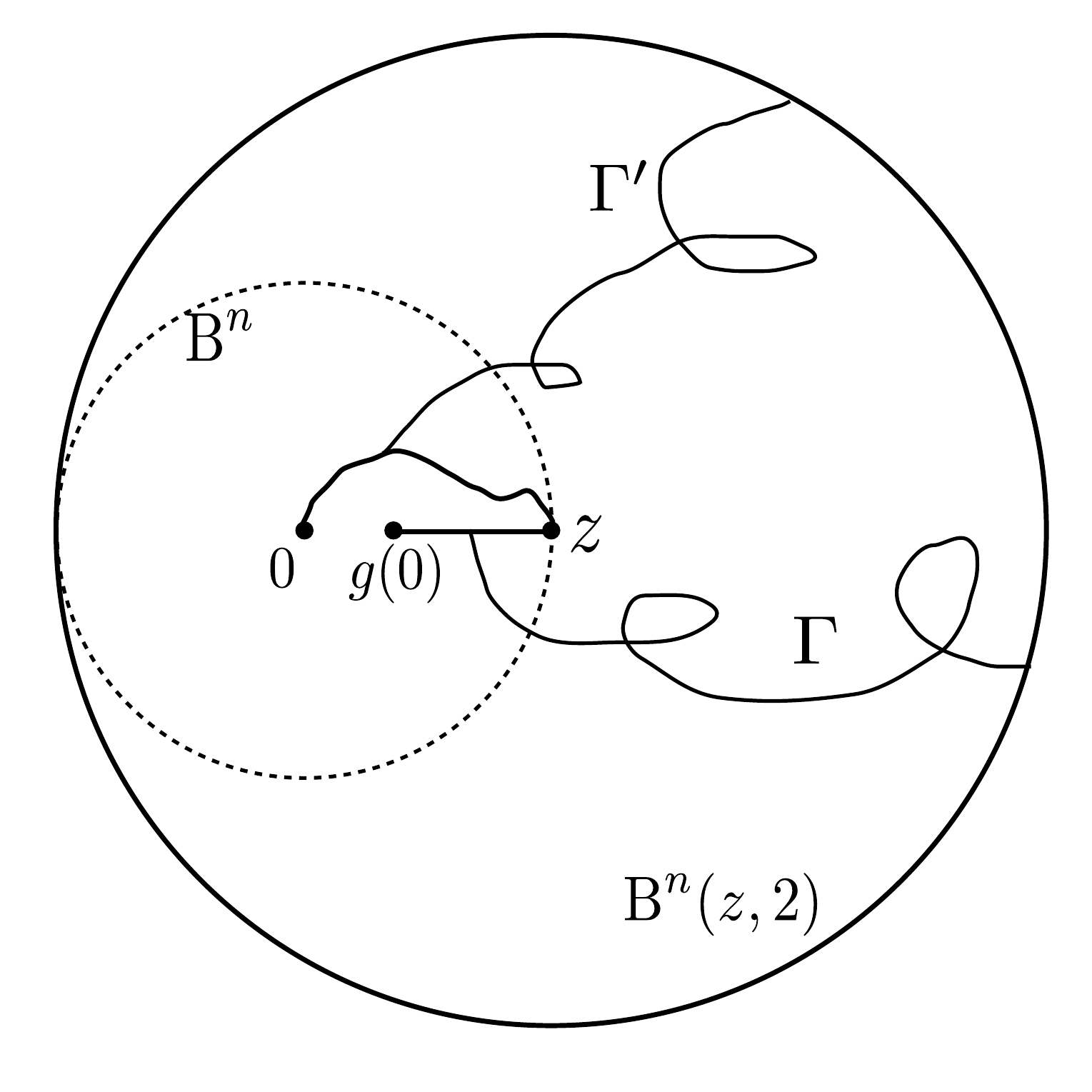}
\caption{The proof of Theorem \ref{bnd4ball} visualized.}
\end{figure}

\begin{nonsec}{\rm {\bf Proof of Theorem \ref{bnd4ball}.}
For arbitrarily given $x\in\Bn$ let $T_x$ be a M\"obius transformation of $\MRn$ with $T_x(\Bn)=\Bn$ and $T_x(x)=0$ \cite{b}.
Define $g:\,\MRn\to \MRn$ by setting $g(z)=T_x\circ f\circ T^{-1}_x(z)$ for $z\in\Bn$ and $g(z)=z$ for $z\in\MRn\setminus\Bn$. Then $g\in\Idk(\partial\Bn)$ with $g(0)=T_x(f(x))$. Since the hyperbolic metric $\rho_{\Bn}$ is preserved under M\"obius transformations of $\Bn$ onto itself, we have that for $x\in\Bn$
\begin{equation}\label{normrho}
\rho_{\Bn}(x,f(x))=\rho_{\Bn}(0,g(0)).
\end{equation}

Choose $z\in\partial \Bn$ such that the point $g(0)$ is contained in the segment $[0,z]$.
Let $\Gamma=\Delta([g(0),z],\partial\Bn(z,2);\Bn(z,2))$ be the  family of curves joining $[g(0),z]$ to $\partial\Bn(z,2)$ in $\Bn(z,2)$, and $\Gamma'=g^{-1}(\Gamma)=\Delta(g^{-1}([g(0),z]),\partial\Bn(z,2);\Bn(z,2))$. Then we have
$$
M(\Gamma)=\gamma_n\left(\frac{2}{1-|g(0)|}\right)
$$
and by the spherical symmetrization with center at $z$
$$
M(\Gamma')\geq\gamma_n(2).
$$
By $K-$quasiconformality we have $K\,M(\Gamma)\geq M(\Gamma')$ \cite{va} which, together with \eqref{phi} implies
$$
|g(0)|\leq 1-2\varphi_{1/K,n}(1/2)
$$
and
$$
\rho_{\Bn}(0,g(0))=\log\frac{1+|g(0)|}{1-|g(0)|}\leq\log\frac{1-\varphi_{1/K,n}(1/2)}{\varphi_{1/K,n}(1/2)}.
$$
\hfill$\Box$
}
\end{nonsec}

\begin{nonsec}{\rm {\bf Remark.} \label{rmk4ballpf}} {\rm
(1). If we take a slightly different construction of Gr\"otzsch ring domain, we will get another form of bound as following argument shows
.
\begin{figure}[H]
\includegraphics[width=9cm]{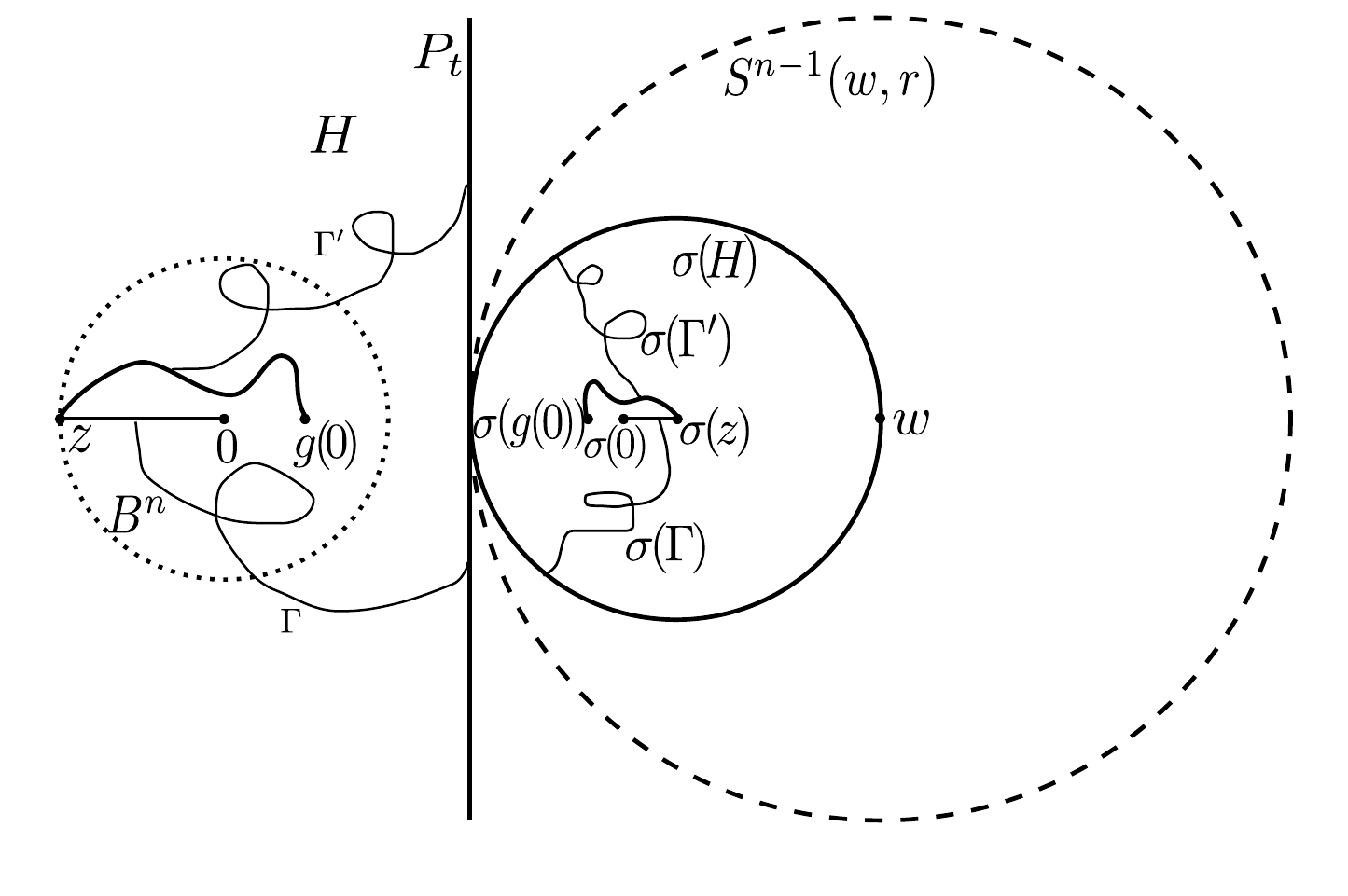}
\caption{The argument visualized.}
\end{figure}

Choose $z\in\partial \Bn$ such that the origin is contained in the segment $[g(0),z]$. For $t\geq0$ let $P_t=P(g(0),(1+t)|g(0)|)=\{x\in\Rn:\, x\cdot g(0)=(1+t)|g(0)|\}$ be the hyperplane in $\MRn$ perpendicular to the vector $g(0)$, at distance $1+t$ from the origin \cite{b}, and the half space $H$ be the component of $\Rn\setminus P_t$ which contains the origin.
Let $\sigma$ be the inversion in the sphere $S^{n-1}(w,r)$ where $w=(3+2t)g(0)/|g(0)|$ and $r=2+t$, then we have
$\sigma(z)=(4+3t)g(0)/(2|g(0)|)$ and $\sigma(H)=\Bn(\sigma(z),r/2)$.
It is easy to see that
$$
|\sigma(z)-\sigma(0)|=\frac{2+t}{6+4t}
$$
and
$$
|\sigma(z)-\sigma(g(0))|=\frac{(2+t)(1+|g(0)|)}{6+4t-2|g(0)|}.
$$
Let $\Gamma=\Delta([0,z],P_t;H)$ be the  family of curves joining $[0,z]$ to $P_t$ in $H$, and $\Gamma'=g(\Gamma)=\Delta(g([0,z]),P_t;H)$. By the conformal invariance of the modulus, we have
$$
M(\Gamma)=M(\sigma(\Gamma))=\gamma_n\left(\frac{r/2}{|\sigma(z)-\sigma(0)|}\right)
$$
and by the spherical symmetrization with center at $\sigma(z)$
$$
M(\Gamma')=M(\sigma(\Gamma'))\geq\gamma_n\left(\frac{r/2}{|\sigma(z)-\sigma(g(0))|}\right).
$$
By $K-$quasiconformality we have $K\,M(\Gamma)\geq M(\Gamma')$ \cite{va} implying
$$
\frac{1+|g(0)|}{3+2t-|g(0)|}\leq\varphi_{K,n}\left(\frac{1}{3+2t}\right),
$$
and further
$$
|g(0)|\leq\frac{(3+2t)\varphi_{K,n}(1/(3+2t))-1}{1+\varphi_{K,n}(1/(3+2t))}.
$$
Since the above inequality holds for all $t\geq0$, the choice $t=0$ gives
$$
|g(0)|\leq\frac{3\varphi_{K,n}(1/3)-1}{1+\varphi_{K,n}(1/3)}
$$
and
$$
\rho_{\Bn}(0,g(0))=\log\frac{1+|g(0)|}{1-|g(0)|}\leq\log\frac{2\varphi_{K,n}(1/3)}{1-\varphi_{K,n}(1/3)}.
$$
Hence we have that
\begin{equation}\label{bnd4ball2}
  \rho_{\Bn}(x,f(x))\leq\log\frac{2\varphi_{K,n}(1/3)}{1-\varphi_{K,n}(1/3)}
\end{equation}
holds for all  $f\in\Idk(\partial \Bn)$ and $x\in\Bn$.
\hfill$\Box$

(2). For the planar case of $n=2$, the inequality \eqref{bnd4ball2} and Theorem \ref{bnd4ball} exactly give the same bound. In fact, by setting $r=1/3$ in the following identity \cite[Theorem 10.5(2)]{avvb}
$$
\varphi_{1/K,2}\left(\frac{1-r}{1+r}\right)=\frac{1-\varphi_{K,2}(r)}{1+\varphi_{K,2}(r)},\quad r\in[0,1],
$$
we have
$$
\log\frac{1-\varphi_{1/K,2}(1/2)}{\varphi_{1/K,2}(1/2)}=\log\frac{2\varphi_{K,2}(1/3)}{1-\varphi_{K,2}(1/3)}.
$$

(3). We claim that Theorem \ref{bnd4ball} yields a better bound than Theorem \ref{mvthm}  when $n=2\,.$ Indeed, from \cite[Theorem 10.9(2)]{avvb} and the proof of \cite[Theorem 10.15]{avvb} it is easy to see that
$$
\varphi_{1/K,2}(r^p)\leq\varphi_{1/K,2}(r)^p\quad \mbox{for}\quad 0<p<1,
$$
which implies that
$$
\log\frac{1-\varphi_{1/K,2}(1/2)}{\varphi_{1/K,2}(1/2)}\leq\log\frac{1-\varphi_{1/K,2}(1/\sqrt2)^2}{\varphi_{1/K,2}(1/\sqrt2)^2}.
$$
}
\end{nonsec}

\begin{figure}[h]\label{fig4rho}
\includegraphics[width=9cm]{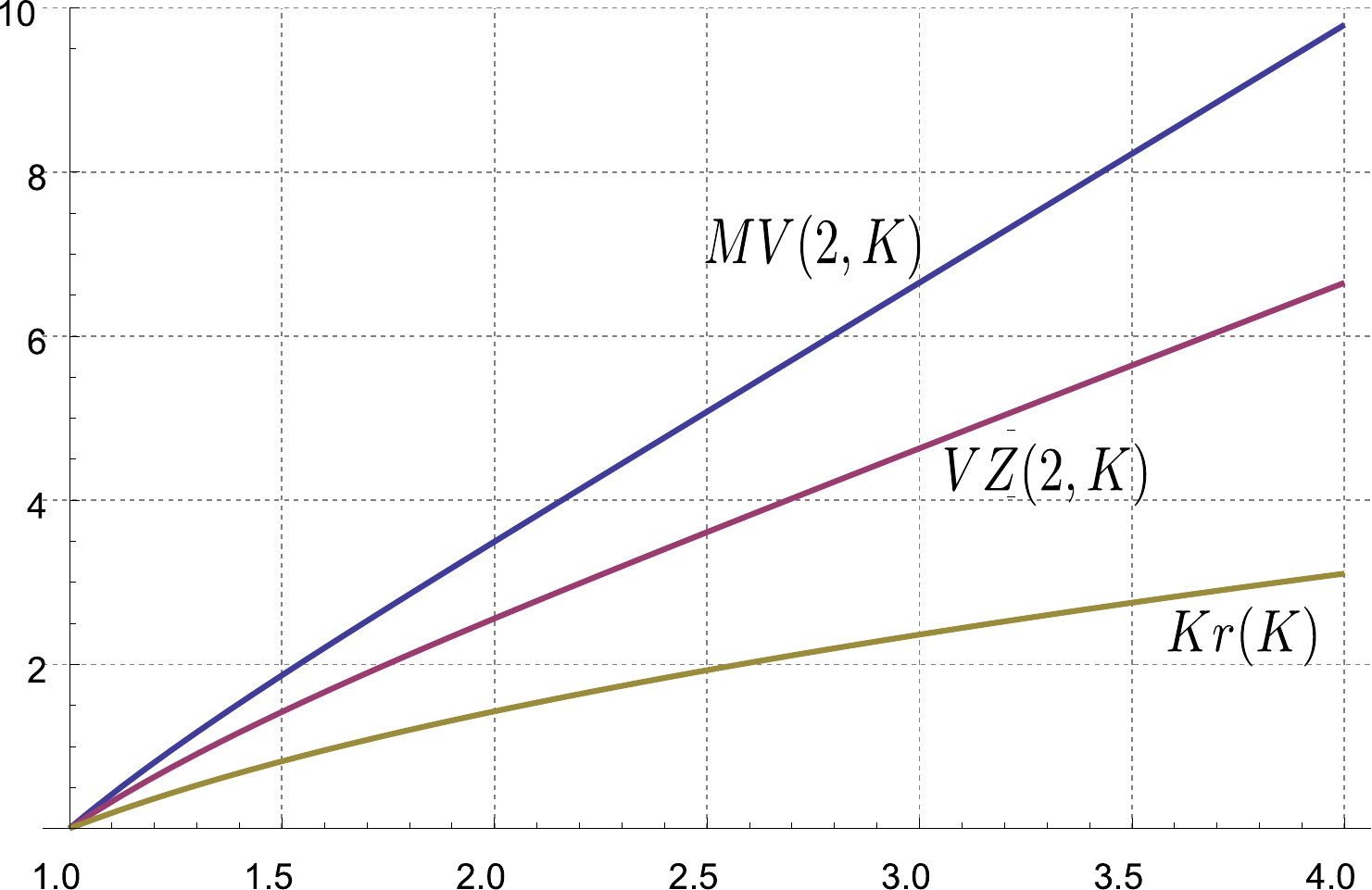}
\caption{\small Graphical comparison of bounds when $n=2$ as a function of $K$: (a) the Manojlovi\'c and Vuorinen bound from
Theorem \ref{mvthm},
$$
MV(2,K)=\log\frac{1-\varphi_{1/K,2}(1/\sqrt2)^2}{\varphi_{1/K,2}(1/\sqrt2)^2}.
$$
(b) the bounds from Theorem \ref{bnd4ball},
$$
VZ(2,K)=\log\frac{1-\varphi_{1/K,2}(1/2)}{\varphi_{1/K,2}(1/2)}.
$$
(c) the Krzy\.z bound from Theorem \ref{krzyz0} (valid only for $n=2$),
$$
Kr(K)=2\arctanh\mu^{-1}\left(\log\frac{\sqrt{K}+1}{\sqrt{K}-1}\right).
$$}
\end{figure}

Note that Vuorinen's example related to
\eqref{mvu84} in Remark \ref{mvu84rmk} is unbounded. For bounded domain $D\subset\Rn$, however, we have following estimate.

\bthm\label{thm4bdomain}
Let $D$ be a bounded domain in $\Rn$, and $f\in\Idk(\partial D)$. Then for all $x\in D$
$$
|f(x)-x|\leq{\rm diam}(D)\tanh\left(\dfrac{1}{2}\log\dfrac{1-b}{b}\right),\quad b=\varphi_{1/K,n}(1/2) \,.
$$
\ethm

\bpf
 For $x\in D$, $D\subset\Bn(x,{\rm diam}(D))$ since $D$ is bounded. Let $g(w)=(w-x)/{\rm diam}(D)$. It is easy to see that $h=g\circ f\circ g^{-1}\in\Idk(\partial\Bn)$. Hence it follows from Theorem \ref{bnd4ball} that
 $$
   \rho_{\Bn}\left(\dfrac{f(x)-x}{{\rm diam}(D)},0\right)=\rho_{\Bn}(h(0),0)\leq\log\dfrac{1-b}{b},
 $$
 and hence
 $$
   |f(x)-x|\leq{\rm diam}(D)\tanh\left(\dfrac{1}{2}\log\dfrac{1-b}{b}\right)
 $$
 since $\rho_{\Bn}(z,0)=2\arctanh|z|$ for $z\in\Bn$.
\epf

\begin{example}
{\rm
For sufficiently small $\epsilon\in (0,1)\,$ there exists $f\in{\rm Id}_{1+\epsilon}(\partial(\Bn\setminus\{0\}))$ and $x_0\in\Bn\setminus\{0\}$ such that $k_{\Bn\setminus\{0\}}(x_0,f(x_0))=1/\epsilon.$ Actually, we can take $f$
to be the radial mapping $f(x)=|x|^{a-1}x$ with $a=(1+\epsilon)^{1/(n-1)}$, and $x_0=e^{-b}e_1$ with $b=1/(\epsilon(a-1))<\log2.$ Then $K(f)=a^{n-1}=1+\epsilon$ (see \cite[16.2]{va}). It is clear that $|x_0|,|f(x_0)|<1/2$, and hence $k_{\Bn\setminus\{0\}}(x_0,f(x_0))=k_{\Rn\setminus\{0\}}(x_0,f(x_0))=\log(|x_0|/|f(x_0)|)=1/\epsilon.$
}
\end{example}

\begin{nonsec}{\rm {\bf Proof of Theorem \ref{thm:bnd4convex}.}
We may assume that $d(x,\partial D)\leq d(f(x),\partial D)$ since $f^{-1}$ is also in $\Idk(\partial D)$.
Let $z\in\partial D$ with $d(x,\partial D)=|x-z|$. For $t>0$ let $P_t$ be the hyperplane perpendicular to $x-z$ and at distance $t$ from the point $z$, and the half space $H$ be the component of $\Rn\setminus P_t$ which contains $z$. Let $\sigma$ be the inversion in the sphere $S^{n-1}(w,t)$ where $w=z+2t(z-x)/|z-x|$, then we have
$\sigma(H)=\Bn(\sigma(z),t/2)$. It is easy to see that
$$
|\sigma(z)-\sigma(x)|=\left|\frac{t}{2}-\frac{t^2}{2t+|x-z|}\right|
$$
and
$$
|\sigma(z)-\sigma(y)|=\left|\frac{t}{2}-\frac{t^2}{2t+|f(x)-z|}\right|
$$
where $y=z+|f(x)-z|(x-z)/|x-z|$.

Let $\Gamma=\Delta([x,z],P_t;H)$ be the  family of curves joining $[x,z]$ to $P_t$ in $H$, and $\Gamma'=f(\Gamma)=\Delta(f([x,z]),P_t;H)$. By the conformal invariance of the modulus and the spherical symmetrization with center at $z$,
$$
M(\Gamma)=M(\sigma(\Gamma))=\gamma_n\left(\frac{t/2}{|\sigma(z)-\sigma(0)|}\right)
$$
and
$$
M(\Gamma')\geq M(\Delta([y,z],P_t;H))=\gamma_n\left(\frac{t/2}{|\sigma(z)-\sigma(y)|}\right).
$$
By $K-$quasiconformality we have $K\,M(\Gamma)\geq M(\Gamma')$ implying
$$
\frac{|f(x)-z|}{2t+|f(x)-z|}\leq\varphi_{K,n}\left(\frac{|x-z|}{2t+|x-z|}\right).
$$
Setting $2t/|x-z|=(1-s)/s$, we have
$$
\frac{|f(x)-z|}{|x-z|}\leq \frac{1-s}{s}\frac{\varphi_{K,n}(s)}{1-\varphi_{K,n}(s)}.
$$
Since $D$ is convex, it is easy to see that
$$|f(x)-z|^2\geq |x-f(x)|^2+|x-z|^2,$$
and hence
$$
\dfrac{|x-f(x)|}{|x-z|}\leq\sqrt{\left(\dfrac{|f(x)-z|}{|x-z|}\right)^2-1}\,.
$$
The definition of the $j-$metric, together with the last two inequalities yields
$$
j_D(x,f(x))\leq \log\left(1+\sqrt{\left(\frac{1-s}{s}\frac{\varphi_{K,n}(s)}{1-\varphi_{K,n}(s)}\right)^2-1}\right).
$$
Taking $s=1/3$, i.e. $t=|x-z|$, we get the inequality as desired.
\hfill$\Box$
}
\end{nonsec}

In order to prove Theorem \ref{thm:bnd4convexSK}, we need the following lemma.

\blem\label{lem4log}
The function
$$h_1(t)=\dfrac{\log^2(1+t)}{\log(1+t^2)}$$
is strictly decreasing in $(0,1)$ and strictly increasing in $(1,\infty)$. In particular, for $0<t\leq1$,
\begin{equation}\label{bernoulli}
\log^2(1+t)\leq\log(1+t^2).
\end{equation}
\elem

\bpf
Let $g(t)=(1+1/t)\log(1+t)=g_1(t)/g_2(t)$ with $g_1(t)=\log(1+t)$ and $g_2(t)=t/(1+t)$. It is easy to see that
$g_1(0)=0=g_2(0)$ and $g'_1(t)/g'_2(t)=1+t$ which is clearly strictly increasing in $(0,\infty)$. It follows from the l'H\^opital Monotone Rule \cite[Lemma 2.2]{avv1} that the function $g$ is strictly increasing in $(0,\infty)$.

By elementary computation, we have
$$
h'_1(t)=\frac{2t^2\log(1+t)}{(1+t)(1+t^2)\log^2(1+t^2)}(g(t^2)-g(t)),
$$
which is negative for $t\in(0,1)$ and positive for $t\in(1,\infty)$ by the monotonicity of $g$. Hence $h_1$ is strictly decreasing in $(0,1)$ and strictly increasing in $(1,\infty)$. The inequality (\ref{bernoulli}) follows from the monotonicity of $h_1$ since $h_1(0+)=1$.
\epf

\begin{nonsec}{\rm {\bf Proof of Theorem \ref{thm:bnd4convexSK}.}
Since $\lambda_n\leq 2e^{n-1}$ and $K\leq K_n$, it follows that
\begin{equation}\label{eqn:pfbndconvSK1}
(3\lambda_n)^{1-\alpha}\leq2.
\end{equation}
It is easy to check that, for $1\leq a\leq2$,
$$
\frac{2a/3}{1-a/3}\leq a^2,
$$
which, together with \eqref{eqn:pfbndconvSK1} and \eqref{bd4phi}, implies that
\begin{equation}\label{eqn:pfbndconvSK2}
\frac{2\varphi_{K,n}(1/3)}{1-\varphi_{K,n}(1/3)}\leq\frac{2(3\lambda_n)^{1-\alpha}/3}{1-(3\lambda_n)^{1-\alpha}/3}
\leq(3\lambda_n)^{2(1-\alpha)}\leq4.
\end{equation}
Let $h_1$ be as in Lemma \ref{lem4log}. Then
   $$h_1\left(\sqrt{\left(\frac{2\varphi_{K,n}(1/3)}{1-\varphi_{K,n}(1/3)}\right)^2-1}\right)\leq \max\{h_1(0+),h_1(\sqrt{15})\}=h_1(0+)=1$$
   since $h_1(\sqrt{15})=0.9046\cdots<1$. Hence
\begin{eqnarray*}\label{eqn:pfbndconvSK3}
\log\left(1+\sqrt{\left(\frac{2\varphi_{K,n}(1/3)}{1-\varphi_{K,n}(1/3)}\right)^2-1}\right)
&\leq&\sqrt{2\log\frac{2\varphi_{K,n}(1/3)}{1-\varphi_{K,n}(1/3)}}\\
&\leq&2\sqrt{\log{(3\lambda_n)}}\sqrt{1-\alpha}\\
&\leq&2\sqrt{\log{\left(6^{1-1/K}K\right)}}\\
&\leq&2\sqrt{1+\log6}\sqrt{K-1},
\end{eqnarray*}
as desired.
\hfill$\Box$
}
\end{nonsec}

Since a bounded convex domain is uniform, we have the following estimate for the quasihyperbolic metric.

\bcol\label{col4kd}
Let $D\subsetneq\Rn$ be a bounded convex domain and $f\in{\rm Id}_K(\partial D)$. Then for all $x\in D$.
$$
k_D(x,f(x))\leq U(D)\log\left(1+\sqrt{\left(\frac{2\varphi_{K,n}(1/3)}{1-\varphi_{K,n}(1/3)}\right)^2-1}\right)
$$
where $U(D)$ is the uniformity constant of the domain $D$.
\ecol

 It is well known that the unit ball $\Bn$ is a uniform domain with the constant $U(\Bn)=2$. This fact,
 together with Corollary \ref{col4kd} and Theorem  \ref{thm:bnd4convexSK}, yields the following estimate.

\bcol
Let $K\in(1,K_n]$ and $f\in{\rm Id}_K(\partial \Bn)$, then for all $x\in \Bn$
$$
k_{\Bn}(x,f(x))\leq 4\sqrt{1+\log6}(K-1)^{1/2}.
$$
\ecol

\begin{remark}{\rm
For the planar case, by \cite[Theorem 10.5 (3)]{avvb} we have
$$
1-\varphi_{K,2}(r)=(1+\varphi_{K,2}(r))\varphi_{1/K,2}\left(\frac{1-r}{1+r}\right),
$$
which, together with (\ref{bd4phi}) and (\ref{bd4phi2}), yields for $K>1$
\begin{equation}\label{inequal:lowerbnd41minusphi}
1-\varphi_{K,2}(r)\geq4^{1-K}(1+r)^{1-K}(1-r)^K\geq8^{1-K}(1-r)^K.
\end{equation}
By using the inequality (\ref{inequal:lowerbnd41minusphi}) and \eqref{bd4phi}, a similar argument as in the proof of Theorem \ref{thm:bnd4convexSK} gives
$$
\log\left(1+\sqrt{\left(\frac{2\varphi_{K,2}(1/3)}{1-\varphi_{K,2}(1/3)}\right)^2-1}\right)\leq\sqrt{2\log12}\sqrt{K-1/K}.
$$
}
\end{remark}

\begin{nonsec} {\rm {\bf Proof of Theorem \ref{thm4unipfbnd}.}
The idea of this proof is exactly the same as in the case of uniform domain with connected boundary \cite{vu84}.
Write $y=f(x)$. We may assume $d(x)\leq d(y)$. Fix  $z\in \partial D$ such that $d(x)=|x-z|$. Then we have
$|y-z|\geq|x-z|$ and
\beqn\label{ratio1}
\dfrac{|y-z|}{|x-z|}\geq\dfrac12\dfrac{|x-y|}{d(x)}\geq\frac12\left(e^{j_D(x,y)}-1\right)\geq \frac12\left(e^{k_D(x,y)/U}-1\right),
\eeqn
where $U$ is the uniformity constant of the domain $D$.
Assume now that $k_D(x,y)\geq2nU\log(1+2e^s)$ where $s$ is the constant of uniform perfectness of the domain $D$. Then this condition together with (\ref{ratio1}) yields
\beqn\label{ratio2}
\dfrac{|y-z|}{|x-z|}\geq e^{k_D(x,y)/(2U)}\geq(1+2e^s)^n.
\eeqn
Write $m=|x-z|$, $M=|y-z|$ and $t=m^{1/n}M^{1-1/n}$. Then $t\in(m,M)$ and $M/t=(M/m)^{1/n}\geq 1+2e^s$.
Let $[a,x]=\{au+x(1-u):\, 0\leq u\leq1\}$, $A=\partial D\setminus\overline\Bn(x,t)$, and let $\Gamma_t=\Delta([a,x],A)$ be the family of all curves joining $[a,x]$ to $A$. From Lemma \ref{lemma} and \cite[7.5]{va} it follows that
$$
C\log\dfrac{M}{t}\leq M(f\Gamma_t)\leq K\,M(\Gamma_t)\omega_{n-1}\left(\log\dfrac{t}{m}\right)^{1-n},
$$
and hence
\beqn\label{lbnd}
  K \geq d_n\left(\log\dfrac{M}{m}\right)^n;\quad d_n=C(n,s)(n-1)^{n-1}/(\omega_{n-1}n^n),
\eeqn
where $C(n,s)$ depends only on $s$ and $n$, and $\omega_{n-1}$ depends only on $n$.
Combining (\ref{lbnd}) and the first inequality of (\ref{ratio2}), we have
$$
  K \geq d_n(2U)^{-n}k_D(x,y)^n
$$
for $k_D(x,y)\geq 2nU\log(1+2e^s)$.
Since $K\geq1$, it is clear that
$$K\geq k_D(x,y)^n/(2nU\log(1+2e^s))^n$$
for $k_D(x,y)< 2nU\log(1+2e^s)$.
Hence in all cases
$$
  K \geq c_2(n,D)k_D(x,y)^n,
$$
where $c_2(n,D)=\min\{d_n(2U)^{-n},(2nU\log(1+2e^s))^{-n}\}.$
\hfill$\Box$
}
\end{nonsec}

Next we study the distortion of $K$--quasiconformal mappings $f:\,\MRn\to\MRn$  with the property
\begin{equation}
f(te_1)=te_1\qquad \mbox{for all $t\in\R$}.
\end{equation}
The following theorem improves the result of \cite[Theorem 1.6]{fv}. Observe that the constant in the theorem tends to one when $K$ goes to 1.

\begin{theorem}
Let $f:\,\Rn\to\Rn$ be a $K$--quasiconformal mapping which keeps the $x_1$--axis pointwise fixed. If $K>1$, then
$$
|f(x)|\leq\left(\lambda_n^{2\beta-2}\frac{\beta^{\beta}}{(\beta-1)^{\beta-1}}-\lambda^{2-2\beta}\frac{(\beta-1)^{\beta+1}}{4\beta^{\beta}}\right)|x|
$$
for all $x\in\Rn$ where $\beta=K^{1/(n-1)}$ and $\lambda_n$ is the Gr\"otzsch ring constant.
\end{theorem}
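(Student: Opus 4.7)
The strategy is to adapt the standard Teichm\"uller--Gr\"otzsch modulus argument of \cite{fv} while exploiting the sharp two-sided bounds \eqref{bd4phi}--\eqref{bd4phi2} for $\varphi_{K,n}$, and to introduce a free parameter (in the spirit of \cite{bv}) which will be eliminated by an explicit calculus optimization. The target constant splits as $\Phi(K,n)=A-(\beta-1)^2/(4A)$ with $A=\lambda_n^{2(\beta-1)}\beta^\beta/(\beta-1)^{\beta-1}$, so the proof must produce both the leading term $A$ and the correction $(\beta-1)^2/(4A)$.

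First I would set $r=|x|$ and $y=f(x)$. If $x$ lies on the axis, $f(x)=x$ and the claim is trivial; otherwise, conjugating $f$ by the reflection $x_1\mapsto -x_1$ (which preserves the class under study) lets me assume $x_1\le 0$. For a parameter $s>0$ to be chosen later, the points $\pm sre_1$ are on the axis and hence fixed by $f$, and the choice $x_1\le 0$ guarantees $|x+sre_1|\le\sqrt{r^2+s^2r^2}\le|x-sre_1|$. I then apply the modulus inequality $M(f\Gamma)\le KM(\Gamma)$ to the family $\Gamma$ of curves in $\R^n$ joining the axial segment $[-sre_1,0]$ to the continuum $\{x\}\cup[sre_1,\infty)$; since the axial pieces are setwise fixed by $f$, the image family $f\Gamma$ joins $[-sre_1,0]$ to $\{y\}\cup[sre_1,\infty)$. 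Spherical symmetrization about the positive axial ray (cf.~\cite[\S7.5]{va}) reduces both moduli to Teichm\"uller capacities, and translating these through \eqref{eta} (while carefully retaining the Pythagorean decomposition $|y|^2=y_1^2+|y-y_1e_1|^2$ rather than collapsing it to $|y|\le y_1+|y-y_1e_1|$) produces, after invoking the upper bound $\eta_{K,n}(t)+1\le\lambda_n^{2(\beta-1)}(1+t)^\beta$ equivalent to \eqref{bd4phi2}, a quadratic of the shape
\begin{equation*}
\Bigl(\frac{|y|}{r}\Bigr)^{\!2}+(\beta-1)^{2}\;\le\;\Bigl(2A_{s}-\frac{|y|}{r}\Bigr)^{\!2},
\qquad A_{s}:=\lambda_{n}^{2(\beta-1)}\,\frac{(1+s)^{\beta}}{s}.
\end{equation*}

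Expanding the square and cancelling the term $(|y|/r)^2$ on both sides gives the one-parameter bound $|y|/r\le A_s-(\beta-1)^2/(4A_s)$. An elementary differentiation shows
\begin{equation*}
\min_{s>0}\frac{(1+s)^\beta}{s}=\frac{\beta^\beta}{(\beta-1)^{\beta-1}},\qquad\text{attained at}\ s^{*}=\frac{1}{\beta-1},
\end{equation*}
and since $A\mapsto A-(\beta-1)^2/(4A)$ has derivative $1+(\beta-1)^2/(4A^2)>0$ and is therefore increasing in $A>0$, the minimum of the right-hand side occurs at the same $s^{*}$. Substituting yields $A=A_{s^{*}}=\lambda_n^{2(\beta-1)}\beta^\beta/(\beta-1)^{\beta-1}$ and hence
\begin{equation*}
\frac{|y|}{r}\le A-\frac{(\beta-1)^{2}}{4A}=\lambda_{n}^{2\beta-2}\frac{\beta^{\beta}}{(\beta-1)^{\beta-1}}-\lambda_{n}^{2-2\beta}\frac{(\beta-1)^{\beta+1}}{4\beta^{\beta}},
\end{equation*}
which is the stated inequality.

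The main obstacle I anticipate is the very first step: the symmetrization must be set up so that the \emph{Pythagorean} form of the modulus inequality drops out, not merely a linear one. A careless treatment which discards the orthogonal coordinate $|y-y_1e_1|$ would give only $|y|\le Ar$, recovering essentially the older Fehlmann--Vuorinen estimate \cite[Theorem 1.6]{fv} with no quantitative improvement. The sharper bound claimed in the theorem rests entirely on propagating the decomposition $|y|^{2}=y_{1}^{2}+|y-y_{1}e_{1}|^{2}$ through the modulus comparison so that the constant term $(\beta-1)^{2}$ appears on the left-hand side of the squared inequality above; once that is in hand, the optimization in $s$ and the algebra reducing the quadratic to $\Phi(K,n)\,|x|$ are routine.
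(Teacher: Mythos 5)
Your overall architecture --- a capacity comparison with a free parameter on the axis, a Pythagorean refinement producing the subtracted correction term, the bound $\eta_{K,n}(t)+1\le\lambda_n^{2(\beta-1)}(1+t)^{\beta}$ coming from \eqref{eta} and \eqref{bd4phi2}, and the choice $t=1/(\beta-1)$ --- matches the paper, and your closing algebra is correct. But the central modulus estimate has a genuine gap: the condenser you propose cannot yield an upper bound on $|f(x)|$. You join the fixed segment $[-sre_1,0]$ to the set $\{x\}\cup[sre_1,\infty)$. First, this set is not a continuum, and a singleton contributes nothing to a joining family (the curves through a fixed point form a family of modulus zero), so your $M(\Gamma)$ and $M(f\Gamma)$ do not see $x$ or $f(x)$ at all. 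Second, even after connecting $x$ to the ray by an arc, the plate containing $f(x)$ also contains the entire fixed ray $[sre_1,\infty)$, so its capacity against $[-sre_1,0]$ is bounded below by the positive constant $M(\Delta([-sre_1,0],[sre_1,\infty)))$ no matter where $f(x)$ lies; this condenser carries no quantitative information when $|f(x)|$ is large. Third, spherical symmetrization supplies only \emph{lower} bounds for both moduli, whereas the comparison $M(f\Gamma)\le KM(\Gamma)$ must be fed an \emph{upper} bound on the image side that decreases in $|f(x)|$, paired with a lower bound on the domain side. The paper produces exactly this missing upper bound by a different construction: a M\"obius transformation $h$ taking $f(x),0,se_1,\infty$ to $-e_1,-y,y,e_1$ defines a ring $R'$ whose complementary components are the explicit arcs $h^{-1}([-y,y])$ (joining $0$ to $se_1$) and $h^{-1}([-e_1,\infty]\cup[e_1,\infty])$ (joining $f(x)$ to $\infty$); \cite[Theorem 15.9]{avvb} then gives $\mathrm{cap}\,R'\le\tau_n\bigl((|f(x)/s|+|f(x)/s-e_1|-1)/2\bigr)$, while the preimage ring receives the lower bound $\tau_n(|x|/s)$ from \cite[Theorem 8.44]{avvb}. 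That Teichm\"uller-type \emph{upper} estimate is the idea your proposal lacks.

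Two smaller points. The Pythagorean step must be applied to the image point: one needs $|f(x)-se_1|^2\ge|f(x)|^2+s^2$, which requires normalizing $f(x)$ (not $x$) into the left half space; conjugating by the reflection flips the signs of $x_1$ and $f(x)_1$ simultaneously, so you may arrange $f(x)_1\le0$, but your normalization $x_1\le0$ is the wrong one for this inequality. Also, your displayed quadratic is reverse-engineered: the capacity comparison actually yields $u^2+1\le(V-u)^2$ with $u=t\,|f(x)|/|x|$ and $V=2\eta_{K,n}(t)+1$, so after dividing by $t^2$ the constant on the left is $1/t^2$, not $(\beta-1)^2$; the two coincide only once $t=1/(\beta-1)$ has been chosen, so the optimization you describe is not the one that actually occurs (nor is it needed --- any admissible $t$ gives a valid bound, and $t=1/(\beta-1)$ gives the stated one).
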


\begin{proof}
We may assume that $x\neq0$ and $f(x)$ is in the left half space (first coordinate non-positive).
For fixed $s>0$ let $h:\,\MRn\to\MRn$ be the M\"obius transformation which takes $f(x)$, $0$, $se_1$, $\infty$ onto
$-e_1$, $-y$, $y$, $e_1$, respectively, where $|y|<1$. We consider the ring $R'$ whose complement consists of
$E=h^{-1}([-y,y])$ and $F=h^{-1}([-e_1,\infty]\cup[e_1,\infty])$. By \cite[Theorem 15.9]{avvb}, we have
\begin{eqnarray}\label{inequal:capR}
{\rm cap}R'&\leq & \tau_n\left(\frac{|f(x)/s|+|f(x)/s-e_1|-1}{2}\right)\nonumber\\
           &\leq&\tau_n\left(\frac{|f(x)/s|+\sqrt{|f(x)/s|^2+1}-1}{2}\right),
\end{eqnarray}
where the second inequality follows from the inequality $|f(x)-se_1|^2\geq|f(x)|^2+s^2$ and the monotonicity of $\tau_n$. On the other hand, we put $R=f^{-1}(R')$ and conclude by \cite[Theorem 8.44]{avvb}
\begin{equation}\label{inequal:capR'}
{\rm cap}R\geq \tau_n\left(\frac{|x|}{s}\right).
\end{equation}
Inequalities (\ref{inequal:capR}), (\ref{inequal:capR'}), and ${\rm cap}R\leq K{\rm cap}R'$ then yield
$$
\frac{|f(x)|}{|x|}t+\sqrt{\left(\frac{|f(x)|}{|x|}t\right)^2+1}\leq 2\eta_{K,n}(t)+1,\qquad t=\frac{|x|}{s}.
$$
Hence
\begin{eqnarray}\label{inequal:dist}
\frac{|f(x)|}{|x|}&\leq&\frac{(2\eta_{K,n}(t)+1)^2-1}{2(2\eta_{K,n}(t)+1)t}\nonumber\\
                  &<&\frac{(\eta_{K,n}(t)+1)^2-1/4}{(\eta_{K,n}(t)+1)t}\nonumber\\
                  &\leq&\lambda_n^{2(\beta-1)}\frac{(1+t)^\beta}{t}-\frac{\lambda^{2(1-\beta)}}{4(1+t)^{\beta}t},
\end{eqnarray}
where (\ref{inequal:dist}) follows from the formula (\ref{eta}) and the second inequality in (\ref{bd4phi2}).
The choice of $t=1/(\beta-1)$ yields
$$
\frac{|f(x)|}{|x|}\leq \lambda_n^{2\beta-2}\frac{\beta^{\beta}}{(\beta-1)^{\beta-1}}-\lambda^{2-2\beta}\frac{(\beta-1)^{\beta+1}}{4\beta^{\beta}},
$$
and the theorem is proved.
\end{proof}

\begin{nonsec}{\rm {\bf Proof of Theorem \ref{holder}.}
Let us extend $f$ identically outside the unit ball.
For $R>1$  let $h(x)=x/R$, then
$$g:=h\circ f\circ h^{-1}:\Bn\to\Bn$$
is a $K$--quasiconformal mapping.
By applying the following well-known inequality \cite[3.1]{mrv} (also see \cite[3.3]{vu85})
$$\tanh\dfrac{\rho(f(x),f(y))}{2}\leq\varphi_{K,n}\left(\tanh\dfrac{\rho(x,y)}{2}\right)$$
and the estimate for the hyperbolic metric \cite[Exercise 2.52(1)]{vu88}
$$\dfrac{|x-y|}{1+|x||y|}\leq\tanh\dfrac{\rho(x,y)}{2}\leq\dfrac{|x-y|}{1-|x||y|}$$ to the mapping $g$ and points
$x/R,y/R$ for $x,y\in\Bn$,
we have
$$\dfrac{|f(x)/R-f(y)/R|}{1+|f(x)||f(y)|/R^2}\leq\varphi_{K,n}\left(\dfrac{|x/R-y/R|}{1-|x||y|/R^2}\right).$$
Hence by (\ref{bd4phi})
\begin{eqnarray}\label{hldineq}
|f(x)-f(y)|&\leq&\lambda_n^{1-\alpha}\dfrac{R+|f(x)||f(y)|/R}{(R-|x||y|/R)^\alpha}|x-y|^\alpha\nonumber\\
&\leq&\lambda_n^{1-\alpha}A(R)|x-y|^\alpha,
\end{eqnarray}
where $$A(R)=\dfrac{R+R^{-1}}{(R-R^{-1})^\alpha}.$$
It is easy to check that $A(1+)=\infty=A(\infty)$ and
$$R_0=\sqrt{\dfrac{1+\sqrt{\alpha}}{1-\sqrt{\alpha}}}$$
is the unique value of $R$ in the interval $(1,\infty)$ such that $A'(R)=0$. Hence we have
$$C(\alpha):=\min_{1<R<\infty}A(R)=A(R_0)=2^{1-\alpha}\alpha^{-\alpha/2}(1-\alpha)^{(\alpha-1)/2}.$$
Since the inequality (\ref{hldineq}) holds for all $R>1$, we get
$$
|f(x)-f(y)|\leq\lambda_n^{1-\alpha}C(\alpha)|x-y|^\alpha.
$$
It is easy to see that $C(1-)=1$, and hence $M_1(n,K)=\lambda_n^{1-\alpha}C(\alpha)\to1$ as $K\to1$.
\hfill$\Box$
}
\end{nonsec}

Applying the above theorem to the inverse of $f$, we have the following corollary.

\bcol
If $f\in{\rm Id}_K(\partial\Bn)$, then for all $x,y\in\Bn$
$$\dfrac1{M_2(n,K)}|x-y|^{1/\alpha}\leq|f(x)-f(y)|\leq M_2(n,K)|x-y|^\alpha$$
where $M_2(n,K)=M_1(n,K)^{1/\alpha}.$
\ecol

The following figure shows some upper bounds for Mori's constant, and it should be noted that
the first three bounds hold for all quasiconformal self-maps of the unit ball with the origin fixed but without the additional condition of identity boundary values, while the fourth bound holds for all quasiconformal self-maps of the unit ball with identity boundary values but without the condition of the origin fixed. Note that the figure shows the logarithms of the bounds.

\begin{figure}[H]
\includegraphics[width=9cm]{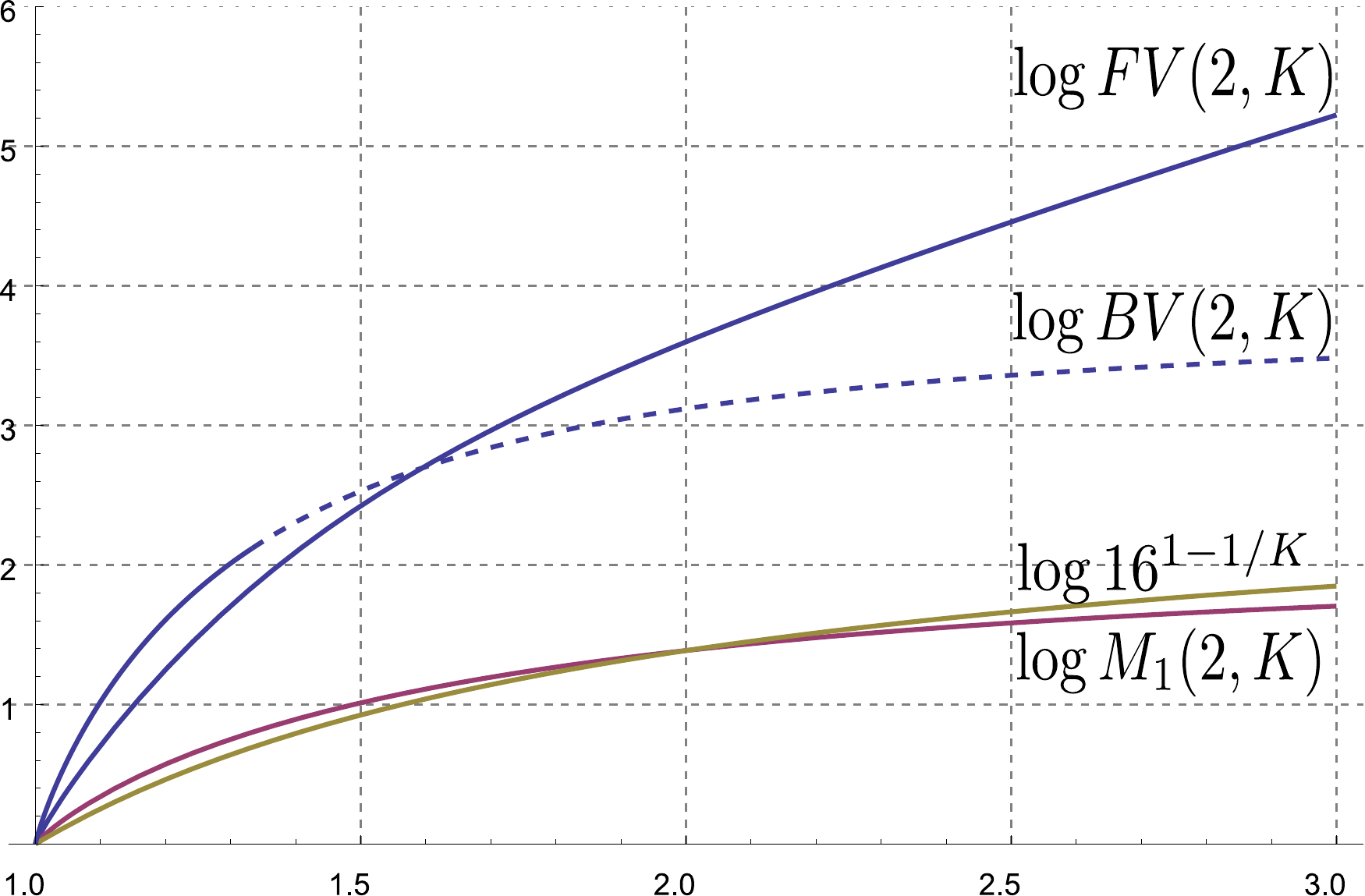}
\caption{Graphical comparison of various bounds when $n=2$ and $\lambda_2=4$, as a function of $K$: (a) the Fehlmann and Vuorinen bound \cite{fv}
$$
FV(2,K)=\left(1+\varphi_{2,K}\left(\dfrac{K^2-1}{K^2+1}\right)\right)2^{2K-3/K}
\dfrac{(K^2+1)^{(K+1/K)/2}}{(K^2-1)^{(K-1/K)/2}},
$$
(b) the Bhayo and Vuorinen bound \cite[1.8, for $n=2$]{bv}, valid for $K\in(1,K_1)$, $K_1=4/3$,
$$
BV(2,K)=3^{1-1/K^2}4^{1-1/K-1/K^2}K^2(K^2-1)^{1/K^2-1},
$$
(c) Mori's conjectured bound $16^{1-1/K}$, (d) the bound $M_1(2,K)$ from Theorem \ref{holder}.}
\end{figure}

\brmk
{\rm In \cite[Remark 3.15]{mv}, it is proved that
$$
  \varphi_{K,2}(r)\leq2\varphi_{K,2}\left(\sqrt{\dfrac{1+r}{2}}\right)^2-1
$$
for all $r\in[0,1]$. Writing $A(r,s)=\sqrt{(r+s)/2}$, then the authors conjectured that
\beqn\label{inequal4phi}
  A(\varphi_{K,2}(r),\varphi_{K,2}(s))\leq \varphi_{K,2}(A(r,s))
\eeqn
holds for all $r,s\in[0,1]$. In fact, by \cite{wzc}
$$
  A(\varphi_{K,2}(r),\varphi_{K,2}(s))^2\leq \varphi_{K,2}(A(r,s)^2),
$$
and by \cite[Theorem 10.15]{avvb}
$$
  \varphi_{K,2}(A(r,s)^2)^{1/2}\leq\varphi_{K,2}(A(r,s)).
$$
Now these two inequalities imply (\ref{inequal4phi}).}
\ermk

\medskip

\subsection*{Acknowledgments}
The research of Matti Vuorinen was supported by the Academy of Finland, Project 2600066611.
Xiaohui Zhang is indebted to the Finnish National Graduate School of Mathematics and
its Applications for financial support.


\medskip


\end{document}